\documentclass[12pt]{article}

\usepackage{verbatim}
\usepackage{amsmath}
\usepackage{amsthm}
\usepackage{eucal}
\usepackage{amssymb}
\usepackage{epsfig}
\topmargin -0.5in
\textheight 23cm
\textwidth 16.5cm
\oddsidemargin -1mm
\evensidemargin 0.25in 
\newtheorem{theo}{Theorem}
\newtheorem{prop}[theo]{Proposition}

\newtheorem{lemma}[theo]{Lemma}
\newcommand {\pare}[1] {\left( {#1} \right)}
\newcommand {\cro}[1] {\left[ {#1} \right]}
\newcommand {\acc}[1] {\left\{ {#1} \right\}}
\newcommand {\nor}[1] { \left\| {#1} \right\|}
 
\def \R  {\mathbb{R}} 
\def \T  {\mathbb{T}} 
\def \Z  {\mathbb{Z}}

\def \ind {\hbox{ 1\hskip -3pt I}}
\newcommand {\va}[1] {\left| {#1} \right|}

\newtheorem{remarque}[theo]{Remarque}

\begin{document}
\title{\textsc{Large deviations for self-intersection local times in subcritical dimensions}}
\author{Cl\'{e}ment Laurent}
\maketitle
\begin{abstract}
Let $(X_t,t\geq 0)$ be a random walk on $\mathbb{Z}^d$. 
Let   $ l_t(x)=  \int_0^t \delta_x(X_s)ds$ be the local time at site $x$ and
$ I_t= \sum\limits_{x\in\mathbb{Z}^d} l_t(x)^p $ the p-fold self-intersection local time (SILT). Becker and K\"onig \cite{BK}  have recently proved a large deviations principle for $I_t$ for all $(p,d)\in\mathbb{R}^d\times\mathbb{Z}^d$ such that $p(d-2)<2$. We extend these results to a broader scale of deviations and to the whole subcritical domain $p(d-2)<d$. Moreover we unify the proofs of the large deviations principle using a method introduced by Castell \cite{Castell} for the critical case $p(d-2)=d$ and developed by Laurent \cite{Laurent} for the critical and supercritical case $p(d-\alpha)\geq d$ of $\alpha$-stable random walk.

\end{abstract}

\section{Introduction}
Let $(X_t,t\geq 0)$ be a continuous time simple random walk on $\mathbb{Z}^d$, started from the origin. We denote by $\Delta$ his generator given by  
$$\Delta f(x)= \sum\limits_{y\sim x} (f(y)-f(x))$$
where we sum over the nearest neighbors of $x$. 

Let $P$ and $E$ be the associated probability measure and expectation of the walk. In this article we are interested in the self-intersection local times (SILT):
$$ \forall p>1,\forall t>0, I_t=\sum_{x\in\mathbb{Z}^d}l_t(x)^p,\text{ where } \forall x\in\mathbb{Z}^d, \forall t>0, l_t(x)=\int_0^t \delta_x(X_s)ds.$$

The SILT measures how much the random walk does intersect itself. Indeed, it is easy to see that if $I_n$ is the discrete analogous of $I_t$, then we have $$n\leq I_n\leq n^p,$$ because if the random walk stays one unit of time in each visited site then $I_n=n$, and if the random walk stays all the time in one site, $I_n=n^p$. This is a first clue to understand the SILT. More precisely, we have some law of large numbers type results. In dimension $1$, we know that the random walk is  recurrent  positive, so the random walk does intersect itself a lot and $I_t\sim t^{(p+1)/2}$. In dimension $2$, the random walk is still recurrent but has more space to live and now $I_t\sim t(\log t)^{p-1}$ (see \cite{Cerny}). For higher dimension $d\geq 3$, the random walk is now transient and spends about one unit of time in each visited site, and $I_t\sim t$ (see \cite{BK2}). 
It is clear that the SILT is related with the intersections of the random walk when $p$ is an integer, because we can then rewrite the SILT as follows:

 $$I_t= \int_0^t\cdot\cdot\cdot \int_0^t \ind_{X_{t_1}=\cdot\cdot\cdot =X_{t_p}}dt_1\cdot\cdot\cdot dt_p.$$

Our ability to determine if the trajectory of the random walk is unfolded or very concentrated in a few number of sites, interests physicists and  notably statistical mechanicians (see for instance \cite{Bolthausen},\cite{West}). For instance, a polymer $(X_n,n\in[0,N])$ can be modeled as the trajectory of a nearest neighborhood random walk under Gibbs measure with Hamiltonian $\beta I_N$. This measure favors polymers with few intersections for $\beta>0$ whereas  it favors polymers with a lot of intersections when $\beta <0$.

The SILT is also relevant when we are interested in the random walk in random scenery (see for instance \cite{KS},\cite{AC},\cite{GKS},\cite{Asselah},\cite{FMW},\cite{Asselah5}). The model is the following. Let us consider a random walk $(X_t,t\geq 0)$ on $\mathbb{Z}^d$. The random scenery is an independent and identically distributed field $(Y_z,z\in\mathbb{Z}^d)$ independent of the walk. The associated random walk in random scenery is $Z_t=\int_0^t Y(X_s)ds$. It is easy to see that $Z_t= \sum\limits_{z\in\mathbb{Z}^d} Y(z)l_t(z)$ where $l_t(z)$ is the local time of the random walk $(X_t,t\geq 0)$. Now if we assume that $(Y(z),z\in\mathbb{Z}^d)$ is for instance a centered Gaussian field, then conditionally on $(X_s,0\leq s\leq t)$, $Z_t\sim \mathcal{N}(0,I_t)$.

We have seen some Law of large numbers results, let us state Central limit theorem in the most studied case $p=2$:

\begin{enumerate}
\item in dimension $d=1$, $\frac{I_t}{t^{3/2}}\xrightarrow {(d)} \gamma_1$, where $\gamma_1$ is the intersection local time of a Brownian motion (see \cite{Borodin},\cite{ChenLi},\cite{Perkins}).
\item in  dimension $d=2$, $\frac{I_t-E[I_t]}{t}\xrightarrow {(d)}\gamma_1'$, where $\gamma_1'$ is the renormalized intersection local time of a Brownian motion (see \cite{Dynkin},\cite{LeGall},\cite{Rosen},\cite{Stoll}), and  $E[I_t]\sim Ct\log(t)$ where C is an explicite constant (see \cite{Cerny}).
\item in dimension $d\geq 3$,  $\frac{I_t-E[I_t]}{\sqrt{\text{Var}(I_t)}}\xrightarrow {(d)} \mathcal{N}(0,1)$ (see \cite{Chen2}), where $E[I_t]\sim Ct$ with C an explicite constant (see \cite{BK2}), and $Var(I_t)\sim 
\left\{ \begin{array}{ll}
	t\log(t) \text{ if } d=3\, 
	\\
	  t \text{ if } d>4\, .
	\end{array} 
\right.$ 
(see \cite{BS}).
\end{enumerate}

Since the law of large numbers and limit laws have been established, it is natural to be interested in the large deviations of the SILT. The large deviations are the study of rare events. In this article we  wonder how $I_t$ can exceed its mean, i.e. we estimate the probability $P(I_t\geq t^pr_t^p)$ where $t^tr_t^p\gg E[I_t]$. Heuristically, it is interesting to ask how the walk can realize this kind of atypical event. We propose here a classical strategy for the walk to realize large deviations of its SILT.

Let us localize the walk in a ball of radius R up to time $\tau$.
On one hand, the walk arrives at the edge of the ball in $R^2$ units of time, and the probability of this localization is about $\exp (-\frac{\tau}{R^2})$. 
On the other hand, the walk spends  about $\frac{\tau}{R^d}$ units of time on each site of the ball, so $I_t$ increases to $\pare{\frac{\tau}{R^d}}^p R^d=\tau^p R^{d(1-p)}$.
We want $I_t=t^pr_t^p$, which gives $\tau=tr_t R^\frac{d(p-1)}{p}$. Thus the probability of this localization is about $\exp\pare{-tr_tR^{\frac{d(p-1)}{p}-2}}$. Maximizing this quantity in $R$, we obtain three cases:

\begin{enumerate}
\item$\frac{d(p-1)}{p}-2>0\Leftrightarrow p(d-2)>d$ (supercritical case): in this case the optimal choice for $R$ is $1$. A good strategy to realize the large deviations is to spend a time of order $tr_t$ in a ball of radius $1$, and then: $P(I_t\geq t^tr_t^p)\sim \exp(-tr_t)$.
\item$\frac{d(p-1)}{p}-2=0\Leftrightarrow p(d-2)=d$ (critical case): here the choice of $R$ does not matter. Every strategy consisting in spending a time of order $tr_t R^\frac{d(p-1)}{p}$ in a ball of radius $R$ such that $1\leq R\ll 1/\sqrt{r_t}$ could be a good strategy, so $P(I_t\geq t^pr_t^p)\sim \exp(-tr_t)$.
\item$\frac{d(p-1)}{p}-2<0\Leftrightarrow p(d-2)<d$ (subcritical case): a good strategy is to stay up to time $t$ in a ball of maximal radius, i.e. $\pare{\frac{1}{r_t}}^\frac{p}{d(p-1)}$, thus $P(I_t\geq tr_t)\sim\exp\pare{-tr_t^\frac{2p}{d(p-1)}}$. 
\end{enumerate}

The question of the large deviations for the SILT has been studied a lot during the last decade. We make a brief review of the results obtained so far.

In the subcritical case $p(d-2)<d$, the authors of \cite{ChenLi} and \cite{BCR2} use the limit object in the central limit theorem to solve the question. The large deviations principle is obtained for all $p\in]1,+\infty[$  in dimension $1$, and only for $p=2$ in dimension $2$ and $3$. The exact asymptotic of $\frac{1}{tr_t^{2p/d(p-1)}}\log P(I_t\geq tr_t)$ is given in terms of a variational formula, involving functions defined on $\mathbb{R}^d$ and related to Gagliardo-Nirenberg inequality. In a very recent paper, Becker and K\"onig \cite{BK} answer partly the question of the generalization to real value of $p$. Indeed, they prove a large deviations principle with two restrictions. The first one is a parameter-dimension restriction. They need $p(d-2)<2$ instead of $p(d-2)<d$. The second one is a scale restriction. The large deviations principle is only obtained for $I_t\gg (\log t)^\frac{d(p-1)}{d+2}t^\frac{2p+d}{d+2}$ which is larger than the scale of the mean of $I_t$ given previously. 

In the critical case $p(d-2)=d$, Castell \cite{Castell} uses  Eisenbaum isomorphism theorem (theorem \ref{Eisenbaum}) which links the law of the local times with the law of a Gaussian process.  The constant of large deviations is expressed in term of the best constant in a Sobolev inequality. 

In the supercritical case $p(d-2)>d$, Chen and M\"orters \cite{ChenMorters} proved a large deviations principle for integer value of $p$ computing large moments of the SILT. Asselah \cite{Asselah5} improved this result getting the large deviations principle up to the scale of the mean but only for $p=2$. Finally Laurent \cite{Laurent} proved a large deviations principle for all $p>1$ and for an $\alpha$-stable random walk with $p(d-\alpha)\geq d$ using the Eisenbaum isomorphism theorem. 

 A recent monograph of Chen \cite{Chen} summarizes these results, we refer to it for an exhaustive treatment of the subject.

In this paper, we extend the results of Becker and K\"onig \cite{BK} in two different directions. We obtain a large deviations principle down to the scale of the mean, and we take off their condition $p(d-2)<2.$ Furthermore we unify the proofs of the large deviations principle in the three different cases, proving that the method based on the Eisenbaum isomorphism theorem is also working in the subcritical case. 
\paragraph{Main results.}$ $\\

Let us introduce some notations to state our results.
We denote by $\nabla$ and by $\nor{\cdot}_p$ the continuous gradient and the $L^p$-norm of functions defined on $\mathbb{R}^d$, and by $q$ the conjugate of $p$.
\begin{theo}
\label{theo}
  Let $\chi_{d,p}:=\inf\acc{\frac{1}{2}\nor{\nabla g}_2^2, g\in L^2(\mathbb{R}^d)\cap L^{2p}(\mathbb{R}^d) \text{ such that } \nor{g}_2=\nor{g}_{2p}=1}$.
Assume that $p(d-2)<d$ and that \\
- in dimension $d=1$,  $\frac{1}{t^{1/2q}}\ll r_t\ll 1$\\
- in dimension $d=2$,  $\pare{\frac{\log t}{t}}^{1/q}\ll r_t\ll 1$\\
 - in dimension $d\geq 3$,  $\frac{1}{t^{1/q}}\ll r_t\ll 1$\\
 then we have
$$\lim_{t\rightarrow +\infty} \frac{1}{tr_t^{2q/d}} \log P\pare{I_t\geq  t^pr_t^p} = -\chi_{d,p}.$$
\end{theo}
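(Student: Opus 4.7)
The approach follows the Eisenbaum isomorphism strategy of Castell \cite{Castell} and Laurent \cite{Laurent}, now adapted to the subcritical regime. The relevant spatial scale is $R_t := r_t^{-q/d}$, which diverges as $t\to\infty$ and sets the diffusive rescaling that converts lattice quantities into their continuous counterparts on $\R^d$; in particular $tR_t^{-2}=tr_t^{2q/d}$ is precisely the speed of the large deviations rate predicted by the heuristic in the introduction.

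\textbf{Upper bound.} I would begin with the exponential Chebyshev inequality
\begin{equation*}
P(I_t\geq t^pr_t^p) = P\!\pare{\nor{l_t}_p \geq tr_t} \leq \exp(-\theta_t t r_t)\, E\!\cro{\exp\!\pare{\theta_t \nor{l_t}_p}},
\end{equation*}
with $\theta_t$ of order $r_t^{2q/d-1}$ so that $\theta_t t r_t \asymp tr_t^{2q/d}$ matches the target speed. After replacing $t$ by an independent exponential time $\tau$ of mean $T\asymp t$ (a standard Poissonisation, inverted at the end by a Tauberian argument), I would apply the Eisenbaum isomorphism theorem to rewrite
\begin{equation*}
E\cro{F(l_\tau)} = E\!\cro{\pare{1+\phi(0)/s}\, F\!\pare{\tfrac12(\phi+s)^2}},
\end{equation*}
where $\phi$ is the centred Gaussian field on $\Z^d$ with covariance equal to the Green function $G_\tau$ of the walk killed at rate $1/T$, which is finite in every dimension. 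The problem is thereby reduced to a Gaussian exponential moment.

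\textbf{Gaussian variational problem and lower bound.} Rescaling space by $R_t$ and the field by the Brownian-motion power so that the Dirichlet form $\langle \phi, -\Delta \phi\rangle$ on $\Z^d$ passes to $\nor{\nabla g}_2^2$ on $\R^d$, the Gaussian exponential moment above is governed at leading exponential order by
\begin{equation*}
\sup_g \acc{\nor{g}_{2p} - \tfrac12 \nor{\nabla g}_2^2 \,:\, \nor{g}_2 \leq 1}.
\end{equation*}
Optimising in the scaling of $g$ recovers exactly the constant $\chi_{d,p}$; the subcritical hypothesis $p(d-2)<d$ is precisely the condition guaranteeing the Gagliardo--Nirenberg embedding $H^1 \cap L^2 \hookrightarrow L^{2p}$ on $\R^d$, hence that $\chi_{d,p}>0$ and that minimisers exist. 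For the matching lower bound, I would fix an almost-minimiser $g$ of $\chi_{d,p}$, confine the walk to a ball of radius $\asymp R_t$ up to a time $\asymp t$, and require its rescaled occupation profile to resemble $g^2$: by an invariance-principle argument (killed random walk converges to killed Brownian motion at diffusive scale $R_t^2$) together with a Donsker--Varadhan lower estimate for the occupation measure, the probability of this event is at least $\exp(-tR_t^{-2}\cdot \tfrac12 \nor{\nabla g}_2^2 (1+o(1)))$, which equals $-\chi_{d,p}\, tr_t^{2q/d}$ at leading order.

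\textbf{Main obstacle.} The principal difficulty is to carry out the discrete-to-continuous transition at the level of the \emph{sharp} constant in the upper bound: one must control the linear correction $(1+\phi(0)/s)$ and the Eisenbaum shift $s$ uniformly along the Gaussian computation, and one must verify that the discrete Dirichlet form against the field with covariance $G_\tau$ really rescales to $\nor{\nabla g}_2^2$. A secondary source of technical trouble, responsible for the mild dimension-dependent lower bounds imposed on $r_t$, is the need to match the killing rate $1/T$ with the spatial scale $R_t$ so that $G_\tau$ approximates, after rescaling, the Green function of Brownian motion killed at rate one on a ball; this matching fails when $r_t$ decays too quickly, which is exactly what forces the hypotheses on $r_t$ stated in the theorem.
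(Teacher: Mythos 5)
Your high-level plan correctly identifies the Eisenbaum isomorphism and the diffusive rescaling $\alpha_t=r_t^{-q/d}$ as the engine of the proof, and your lower-bound sketch (confine to a ball of radius $\asymp\alpha_t$, Donsker--Varadhan lower estimate for the rescaled occupation profile) is essentially the paper's argument via the Gantert--K\"onig--Shi LDP and a contraction principle applied to the lower semi-continuous map $\nu\mapsto\nor{\nu}_p$. However, your upper bound takes a genuinely different and more fragile route. The paper never computes $E\cro{\exp(\theta_t\nor{l_t}_p)}$: instead, after folding the walk onto the discrete torus $\T_{R\alpha_t}$ (a step you omit entirely) and passing to an exponential time, it compares \emph{probabilities} directly. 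Using independence of $l_{\tau}$ and $Z$ and the elementary bound $N_{p}^p(S)\geq N_p^p(l_\tau)+\tfrac1{2^p}N_{2p}^{2p}(Z+s)$, the paper obtains
\begin{equation*}
P\pare{N_p(l_\tau)\geq tr_t}\;\leq\;\frac{E\cro{(1+Z_0/s);\,\tfrac1{2^p}N_{2p}^{2p}(Z+s)\geq t^pr_t^p\epsilon^p}}{P\pare{\tfrac1{2^p}N_{2p}^{2p}(Z+s)\geq t^pr_t^p\epsilon^p}},
\end{equation*}
so the whole computation reduces to a ratio of Gaussian tail estimates and concentration (Ledoux--Talagrand), not to a Laplace transform. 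This matters at the level of the sharp constant: the exponential moment $E\cro{\exp(\tfrac{\theta}{2}\nor{Z+s}_{2p}^2)}$ blows up once $\theta$ exceeds the reciprocal of the largest eigenvalue of $G_{R\alpha_t,\lambda_t}$, and it is exactly at that threshold that the optimal rate lives, so a Chebyshev--plus--Laplace bound cannot be closed without an additional argument you have not supplied.

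Two further gaps. First, the torus projection is not merely a convenience: the Gaussian step produces the \emph{discrete} periodic variational quantity $\rho_1(a,R,t)=\inf\{\lambda_tN_{2,R\alpha_t}^2(h)+\tfrac12 N_{2,R\alpha_t}^2(\tilde\nabla h): N_{2p,R\alpha_t}(h)=1\}$, and the entire content of Proposition~\ref{limcste} (the longest part of the paper) is the discrete-to-continuous passage $\liminf r_t^{1-2q/d}\rho_1(a,R,t)\geq\rho(a)$ via a careful piecewise-linear extension of the discrete minimiser and a truncation $\Psi_{R_n}$; your phrase ``the discrete Dirichlet form rescales to $\nor{\nabla g}_2^2$'' waves away exactly the hard part. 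Second, the limiting variational problem you wrote, $\sup_g\acc{\nor{g}_{2p}-\tfrac12\nor{\nabla g}_2^2:\nor{g}_2\leq1}$, is not the correct one: the paper obtains $\rho(a)=\inf\acc{a\nor{h}_2^2+\tfrac12\nor{\nabla h}_2^2:\nor{h}_{2p}=1}$, and $-\chi_{d,p}$ only emerges after a final optimisation $\inf_{a>0}\acc{a-\rho(a)}$ over the killing parameter $a$ (Proposition~\ref{cste}); this extra optimisation is absent from your plan.
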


\begin{remarque} About the scale of the deviations.\\
Note that our conditions on $r_t$ are equivalent to $t^pr_t^p\gg E[I_t]$.
We haven't succeeded to go under the scale of the mean. This question is still open with the exception of the dimension 2 and 3 for $p=2$ in the subcritical case $p(d-2)<d$ (see the monograph of Chen \cite{Chen}). In the case where the dimension is larger than 5 and for $p=2$ (supercritical case $p(d-2)>d$), Asselah \cite{Asselah5} has succeeded to obtain the constant of deviations up to the scale of the mean.
\end{remarque}

\begin{remarque} 
\label{nondegenere}
$\chi_{d,p}$ is non degenerate.\\
We prove that the constant $\chi_{d,p}$ is non degenerate linking it to the best constant in the  Gagliardo-Nirenberg inequality. We recall that the Gagliardo-Niremberg constant $K_{d,p}$  is defined by $$K_{d,p}=\sup_g\acc{\frac{\nor{g}_{2p}}{\nor{\nabla g}_2^{d/2q}\nor{g}_2^{1-d/2q}}}$$ 
and is a non degenerate constant in the subcritical case $p(d-2)<d$.
This expression being invariant under the transformation $g_\beta(\cdot)=\beta^{d/2p}g(\beta\cdot)$, we can take the supremum over $\nor{g}_2=1$. So,
$$K_{d,p}=\sup\acc{\frac{\nor{g}_{2p}}{\nor{\nabla g}_2^{d/2q}},\nor{g}_2=1}.$$
Again, we remark that this expression is invariant under the transformation $g_\beta(\cdot)=\beta^{d/2}g(\beta\cdot)$. So we can take the supremum over $\nor{g}_{2p}=1$ then $$K_{d,p}=\sup\acc{\nor{\nabla g}_2^{-d/2q},\nor{g}_2=\nor{g}_{2p}=1}.$$  
So $\chi_{d,p}=\frac{1}{2}K_{d,p}^{-4q/d}$.
\end{remarque}$ $\\
\textbf{Sketch of proof.}\\

The proof of the lower bound of the large deviations principle (Section 3) is quite classical. Let $L_t = \frac{\alpha_t^d}{t} l_t(\lfloor \alpha_t x\rfloor)$ be a rescaled version of $l_t$.
Gantert, K\"onig and Shi (lemma 3.1in \cite{GKS}) proved that for $R>0$, under the sub-probability measure $P(\cdot , supp(L_t)\subset [-R,R]^d)$, $L_t$ satisfies a large deviations principle on \\$\mathcal{F}=\acc{\psi\text{ such that }\nor{\psi}_2=1\text{ and } supp(\psi)\subset [-R,R]^d}$ with rate function $\mathcal{I}(\psi)=\frac{1}{2}\nor{\nabla \psi}_2^2$. Let $\nor{\cdot}_{p,R}$ be the $L^p$-norm of functions defined on $[-R,R]^d$, then the function 
$$\psi\in\mathcal{F}\rightarrow \nor{\psi}_{p,R}=\sup\acc{\int\limits_{[-R,R]^d}\psi(x)\phi(x),\nor{\phi}_{q,R}=1}$$
being lower semi-continuous, we can apply a contraction principle. 

For the upper bound, we can not proceed in the same way as $I_t$ is only a lower semi-continuous function of $l_t$.   Many different methods were developed in the papers written on the subject to overcome this difficulty. In this paper, we use the same method as Castell \cite{Castell} and Laurent \cite{Laurent}, i.e. the Eisenbaum isomorphism theorem (theorem \ref{Eisenbaum}).


Let us describe the proof of the upper bound in theorem \ref{theo} (Section 2). In step 1, we compare the SILT of the random walk with the SILT of the random walk projected on the discrete torus $\T_{R\alpha_t}$ of radius $R\alpha_t$, and stopped at an exponential time of parameter $\lambda_t$ independent of the walk (lemma \ref{tpsexp}). Then we apply in step 2 and 3 Eisenbaum's theorem (theorem \ref{Eisenbaum}) to arrive at a centered Gaussian process $(Z_x,x\in\T_{R\alpha_t})$ whose covariance is given by $G_{R\alpha_t,\lambda}(x,y)=E_x\cro{\int_0^\tau \delta_x(X_s^{R\alpha_t})ds}$ (lemma \ref{gauss}) where $(X_s^{R\alpha_t},s\geq 0)$ is the random walk projected on $\T_{R\alpha_t}$. In step 4 we work on the Gaussian process (lemma \ref{Z}) to obtain an upper bound $\rho_1(a,R,t)$ given by a discrete-space variational formula: 
$$\rho_1(a,R,t)=\inf\acc{\lambda_t N_{2,R\alpha_t}^2(h)+\frac{1}{2}N_{2,R\alpha_t}^2(\tilde\nabla h),h\in L^{2p}(\T_{R\alpha_t})\text{ such that }N_{2p,R\alpha_t}(h)=1},$$
where  $N_p(\cdot)$ is the $l^p$-norm of functions defined on $\mathbb{Z}^d$,  $N_{p,A}(\cdot)$ the $l^p$-norm of A-periodic functions defined on $\mathbb{Z}^d$, and $\tilde\nabla$ the discrete gradient defined by
$$\forall x \in\mathbb{Z}^d, \forall i\in\acc{1,...,d}, \tilde\nabla_i f(x)=f(x+e_i)-f(x),$$
where $(e_1,...,e_d)$ is the canonical base of $\mathbb{R}^d$. 
 Then step 5 is devoted to take the limit using the following proposition.
\begin{prop}
\label{limcste}
Let $\rho(a)=\inf\acc{a\nor{h}_2^2+\frac{1}{2}\nor{\nabla h}_2^2,h\in L^{2p}(\mathbb{R}^d)\text{ such that }\nor{h}_{2p}=1}$. \\
Assume that $\alpha_t=r_t^{-q/d}$ and $\lambda_t= a\alpha_t^{-2}=ar_t^{2q/d}$. If $p(d-2)<d$, then
  $$\liminf_{R\rightarrow +\infty}\liminf_{t\rightarrow + \infty}r_t^{1-2q/d}\rho_1(a,R,t)\geq \rho(a).$$
\end{prop}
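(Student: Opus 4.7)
The plan is to reduce $\rho_1(a,R,t)$ to a continuous variational problem by rescaling: first let $t\to\infty$ with $R$ fixed via Rellich--Kondrachov on the fixed continuous torus, then let $R\to\infty$ via a concentration-compactness argument on $\R^d$. Take $h_t$ nearly minimizing $\rho_1(a,R,t)$ with $N_{2p,R\alpha_t}(h_t)=1$; we may assume the renormalized energy $r_t^{1-2q/d}\rho_1(a,R,t)$ is bounded, otherwise the claim is trivial. Associate to each $h_t$ a function $g_t$ on the continuous torus $[-R,R]^d$ by piecewise-affine interpolation of $x\mapsto \alpha_t^{d/(2p)}h_t(\lfloor\alpha_t x\rfloor)$. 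With $\alpha_t=r_t^{-q/d}$ and $\lambda_t=ar_t^{2q/d}$, direct Riemann-sum computations give
$$\|g_t\|_{2p,R}^{2p}=N_{2p,R\alpha_t}^{2p}(h_t)=1,$$
$$a\|g_t\|_{2,R}^2+\tfrac12\|\nabla g_t\|_{2,R}^2 = r_t^{1-2q/d}\bigl(\lambda_t N_2^2(h_t)+\tfrac12 N_2^2(\tilde\nabla h_t)\bigr)+o(1).$$

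For fixed $R$, the family $(g_t)$ is bounded in $H^1$ of the compact torus. The compact Sobolev embedding $H^1\hookrightarrow L^{2p}$ holds precisely because $p(d-2)<d$ (in $d\ge 3$ this places $2p$ strictly below the critical Sobolev exponent; in $d\in\{1,2\}$ the embedding is automatic for all $p<\infty$). By Rellich--Kondrachov one extracts $g_t\to g_R$ strongly in $L^{2p}$ and weakly in $H^1$; hence $\|g_R\|_{2p,R}=1$ and, by weak lower semicontinuity of the energy,
$$a\|g_R\|_{2,R}^2+\tfrac12\|\nabla g_R\|_{2,R}^2 \le \liminf_{t\to\infty} r_t^{1-2q/d}\rho_1(a,R,t).$$

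Next, view each $g_R$ as a $2R$-periodic function on $\R^d$. Using the translation invariance of the functional on the torus, translate so that the $L^{2p}$-mass of $g_R$ is centered at the origin, and extract by a diagonal argument a further subsequence converging in $L^{2p}_{\mathrm{loc}}(\R^d)$ and weakly in $H^1_{\mathrm{loc}}$ to some $g\in H^1(\R^d)$ with $\|g\|_{2p}\le 1$. The crux, and the main obstacle of the proof, is to show $\|g\|_{2p}=1$, that is, that no $L^{2p}$-mass escapes to infinity. This is handled via a Lions-type concentration-compactness alternative: \emph{vanishing} is ruled out by the subcritical Gagliardo--Nirenberg inequality (cf.\ Remark \ref{nondegenere}), which would otherwise force $\|g_R\|_{2p,R}\to 0$; \emph{dichotomy} into pieces of masses $\mu_1^{2p}$ and $\mu_2^{2p}$ with $\mu_1^{2p}+\mu_2^{2p}=1$ would, by rescaling each piece to unit $L^{2p}$-norm, make the limiting energy at least $\rho(a)(\mu_1^2+\mu_2^2)>\rho(a)$, contradicting the near-optimality of $g_R$ already established. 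Once $\|g\|_{2p}=1$ is secured, $g$ is admissible in the definition of $\rho(a)$, and weak lower semicontinuity combined with the previous inequalities yields $\rho(a)\le a\|g\|_2^2+\tfrac12\|\nabla g\|_2^2\le \liminf_{R\to\infty}\liminf_{t\to\infty} r_t^{1-2q/d}\rho_1(a,R,t)$, as required.
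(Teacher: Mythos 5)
Your strategy—discrete-to-continuum conversion, Rellich--Kondrachov on the fixed torus, then Lions concentration-compactness as $R\to\infty$—is a legitimate soft-analysis alternative to the paper's fully quantitative proof, which instead introduces an explicit cutoff $\Psi_{R_n}$ and tracks every error term by hand. The two strategies agree at the end (your strict subadditivity $\mu_1^2+\mu_2^2>(\mu_1^{2p}+\mu_2^{2p})^{1/p}$ for $p>1$ is exactly what defeats dichotomy), but there are two real gaps in your outline.

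First, the asserted identity $\|g_t\|_{2p,R}^{2p}=N_{2p,R\alpha_t}^{2p}(h_t)$ is false. Piecewise-affine (tetrahedral) interpolation preserves the $L^2$ norm of the \emph{gradient} exactly---this is (\ref{nggn})---but it does not preserve the $L^{2p}$ norm of the function itself: already in one dimension, $h(0)=1,\ h(1)=0$ gives $\int_0^1|1-x|^{2p}\,dx=\tfrac{1}{2p+1}\neq\tfrac12$. Both the $L^2$ and $L^{2p}$ interpolation errors can be controlled, but only via estimates such as (\ref{majnorgn}) and $\|g_n\|_{2p,R_n}\geq 1-C\alpha_{t_n}^{-1}\|\nabla g_n\|_{2p,R_n}$, which in turn lean on $\|h_n\|_\infty\leq 1$ and an a priori bound on $N_{2,R_n\alpha_{t_n}}(\tilde\nabla h_n)$. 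That bookkeeping is the core of the paper's proof of Proposition~\ref{limcste}; it does not fall out of a ``direct Riemann-sum computation,'' and the $o(1)$ you invoke for the energy is not justified as written.

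Second, to run Lions' lemma you must pass from $2R$-periodic functions to genuine elements of $H^1(\mathbb{R}^d)$, and extending a fundamental-domain restriction by zero creates a gradient jump. You therefore need a smooth cutoff, and you must show that both its effect on the energy (cf.\ (\ref{mnggp})) and its effect on the $L^{2p}$ mass vanish as $R\to\infty$; the latter is exactly the translation-averaging argument leading to (\ref{gna}), not merely ``translate so the mass is centered at the origin.'' Filling these two gaps essentially re-derives the paper's estimates, after which your compactness machinery would indeed streamline the passage to the limit.
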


The proof of proposition \ref{limcste} is inspired by the proof of Lemma 2.1 of Becker and K\"onig \cite{BK}. The main difficulty is to pass from the discrete-space variational formula giving $\rho_1(a,R,t)$ to the continuous-space variational formula giving $\rho(a)$. First we take a sequence $h_n$ of functions defined on $\mathbb{Z}^d$ that approach the infimum in the definition of $\rho_1(a,R,t)$. Then we extend these functions on $\mathbb{R}^d$ to build a sequence $g_n$ of continuous functions  defined on $\mathbb{R}^d$. This sequence $g_n$ of functions is our candidate to realize the infimum in the definition of $\rho(a)$.  Furthermore, by definition of $\rho_1(a,R,t)$ and $\rho(a)$, we want to control $N_{2,R\alpha_t}(h_n)$ by $\nor{g_n}_2$ and $N_{2,R\alpha_t}(\tilde\nabla h_n)$ by $\nor{\nabla g_n}_{2}$.  This control, combined with the continuity of the sequence $g_n$ is the main difficulty of the proof. 

We finally prove that the upper and the lower bound are equals with the following proposition:

 \begin{prop}
 \label{cste}
 Let $\rho(a)$ be as in theorem \ref{theo} and $\chi_{d,p}$ be as in proposition \ref{limcste}, then
 $$\inf\acc{a-\rho(a),a>0} =- \chi_{d,p}.$$
 \end{prop}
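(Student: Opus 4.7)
The plan is to make the scale-invariance in the definition of $\rho(a)$ explicit, use it to reduce $\rho(a)$ to a one-parameter formula involving $\chi_{d,p}$, and then minimize in $a$ by elementary calculus.

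First I would parametrize the admissible set $\acc{g:\nor{g}_{2p}=1}$ via the scaling $h_\beta(x)=\beta^{d/(2p)}h(\beta x)$. A short computation gives $\nor{h_\beta}_{2p}=\nor{h}_{2p}$, $\nor{h_\beta}_2^2=\beta^{-d/q}\nor{h}_2^2$ and $\nor{\nabla h_\beta}_2^2=\beta^{2-d/q}\nor{\nabla h}_2^2$. Any $g$ with $\nor{g}_{2p}=1$ is of the form $h_\beta$ with $\nor{h}_2=\nor{h}_{2p}=1$ for a unique $\beta=\nor{g}_2^{-2q/d}$, hence
$$\rho(a)=\inf_{\nor{h}_2=\nor{h}_{2p}=1}\inf_{\beta>0}\pare{a\beta^{-d/q}+\tfrac{1}{2}\beta^{2-d/q}\nor{\nabla h}_2^2}.$$
Writing $\gamma:=d/(2q)$, the subcritical hypothesis $p(d-2)<d$ is exactly the statement $\gamma\in(0,1)$, which is what makes the inner function of $\beta$ strictly convex with a unique interior minimum and simultaneously makes the final optimization in $a$ well-posed. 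Solving the inner infimum explicitly and then using the very definition of $\chi_{d,p}$ for the outer infimum yields
$$\rho(a)=\frac{1}{1-\gamma}\pare{\frac{1-\gamma}{\gamma}}^{\gamma}\chi_{d,p}^{\gamma}\,a^{1-\gamma}=:C\,a^{1-\gamma}.$$

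Finally I would minimize the scalar function $a\mapsto a-Ca^{1-\gamma}$ on $(0,+\infty)$. Its derivative vanishes uniquely at $a_*=(C(1-\gamma))^{1/\gamma}$, at which the value equals $-\gamma a_*/(1-\gamma)$. Substituting the explicit expression for $C$ and regrouping the exponents of $\gamma$ and $1-\gamma$, every power of $1-\gamma$ cancels and one is left with exactly $-\chi_{d,p}$. There is no conceptual obstacle here; the only delicate point is the bookkeeping of exponents. The heart of the argument is the initial scaling step, which is what forces the appearance of the doubly normalized variational problem defining $\chi_{d,p}$ and explains why this specific constant --- rather than the closely related Gagliardo--Nirenberg constant $K_{d,p}$ of Remark \ref{nondegenere} --- is the one that ultimately governs the rate in Theorem \ref{theo}.
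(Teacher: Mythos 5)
Your proposal is correct and takes essentially the same route as the paper: both use the dilation $h_\beta(\cdot)=\beta^{d/2p}h(\beta\cdot)$ to optimize first in the scale parameter $\beta$ and then in $a$. The only (minor) difference is that you normalize $\nor{h}_2=1$ from the outset, so that $\chi_{d,p}$ appears immediately in the formula $\rho(a)=Ca^{1-\gamma}$, whereas the paper carries $\nor{h}_2$ as a free quantity until the very last step and then passes to the doubly normalized constraint by scale invariance.
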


\section{Proof of the upper bound of Theorem \ref{theo} }
Let us begin with a lemma.
We denote by $p_s^{R\alpha_t}(\cdot,\cdot)$ the probability transition of the random walk $(X_s^{R\alpha_t},s\geq 0)$.

\begin{lemma}{Behavior of $G_{R\alpha_t,\lambda_t}(0,0)$}.\\
\label{G}
Assume that $\lambda_t=a\alpha_t^{-2}$ and $\lambda_t\rightarrow 0$. Then for any $a,R>0$,
\begin{enumerate}
\item for $d=1$, $G_{R\alpha_t,\lambda_t}(0,0)=O(\alpha_t)$.
\item for $d=2$, $G_{R\alpha_t,\lambda_t}(0,0)=O(\log\alpha_t)$.
\item for $d\geq 3$, $G_{R\alpha_t,\lambda_t}(0,0)=O(1)$.
\end{enumerate}
\end{lemma}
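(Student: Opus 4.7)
My plan is to write $G_{R\alpha_t,\lambda_t}(0,0)$ as a Laplace transform of the return probability on the discrete torus, and then to separate the diffusive regime $s\lesssim (R\alpha_t)^2$, in which the torus walk behaves like the walk on $\Z^d$, from the equilibrium regime $s\gtrsim (R\alpha_t)^2$, in which it is close to uniform. The three dimensional cases will then differ only in how the resulting elementary Laplace integrals of $s^{-d/2}$ behave as $\lambda_t\to 0$.

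First, by the definition of $G$ and Fubini applied to the independent exponential clock of parameter $\lambda_t$,
\[
G_{R\alpha_t,\lambda_t}(0,0)=\int_0^\infty e^{-\lambda_t s}\,p_s^{R\alpha_t}(0,0)\,ds.
\]
Writing $N=R\alpha_t$ and using that the torus walk is the projection of the walk on $\Z^d$, $p_s^N(0,0)=\sum_{z\in\Z^d}p_s^{\Z^d}(0,Nz)$. Standard Gaussian heat-kernel estimates on $\Z^d$, of the form $p_s^{\Z^d}(0,y)\leq Cs^{-d/2}e^{-c\va{y}^2/s}$, together with the trivial $p_s^{\Z^d}(0,0)\leq 1$, yield after summation over $z$ the uniform bound
\[
p_s^N(0,0)\leq C\pare{1\wedge s^{-d/2}}+\frac{C}{N^d},
\]
with constants independent of $t$ and $R$, the two terms being respectively the diffusive and the equilibrium contributions.

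It then remains to evaluate the two resulting Laplace integrals. The constant term $C/N^d$ contributes $CN^{-d}\lambda_t^{-1}=CR^{-d}\alpha_t^{2-d}/a$, which is $O(\alpha_t)$ in dimension $1$ and $O(1)$ (even $o(1)$ for $d\geq 3$) in dimensions $d\geq 2$. For the diffusive term, a direct computation using $\lambda_t=a\alpha_t^{-2}\to 0$ gives
\[
\int_0^\infty e^{-\lambda_t s}\pare{1\wedge s^{-d/2}}ds=\begin{cases} O(\lambda_t^{-1/2})=O(\alpha_t) & \text{if } d=1,\\ O(\log(1/\lambda_t))=O(\log\alpha_t) & \text{if } d=2,\\ O(1) & \text{if } d\geq 3.\end{cases}
\]
Comparing orders of magnitude in each case yields exactly the three announced bounds.

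The real content is the uniform torus heat-kernel bound; once it is in hand each dimensional case reduces to a one-line Laplace computation. The only delicate point I expect is controlling the sum $\sum_{z\neq 0}p_s^{\Z^d}(0,Nz)$ in the equilibrium regime $s\gtrsim N^2$, where the Gaussian tail $e^{-cN^2\va{z}^2/s}$ no longer decays fast enough in $z$ and must be estimated by a Gaussian integral: this is exactly what produces the additive $N^{-d}$ correction (morally the uniform measure on $\T_N$), and this correction is harmless in the subsequent Laplace integration because the product $N^{-d}\lambda_t^{-1}$ is always of lower order than the diffusive contribution.
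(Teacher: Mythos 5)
Your proof is correct and leads to the same final one-line Laplace computations as the paper's; the difference lies entirely in how you obtain the key heat-kernel estimate $p_s^{R\alpha_t}(0,0)\leq C(1\wedge s^{-d/2})+C(R\alpha_t)^{-d}$. The paper quotes it directly as a consequence of Nash inequalities (theorems 3.3.15 and 2.3.1 in Saloff-Coste), in the two-sided form $\left\vert p_s^{R\alpha_t}(0,0)-(R\alpha_t)^{-d}\right\vert\leq Cs^{-d/2}$, whereas you derive the one-sided upper bound from scratch via the folding identity $p_s^{N}(0,0)=\sum_{z\in\Z^d}p_s^{\Z^d}(0,Nz)$ together with Gaussian heat-kernel bounds on $\Z^d$. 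Your route is more elementary and self-contained (no Nash machinery), at the cost of having to control the sum over lattice translates; the paper's route is shorter because it invokes a ready-made theorem that gives the bound uniformly in $s$. One small point to keep in mind if you flesh out the folding step: the Gaussian upper bound $p_s^{\Z^d}(0,y)\leq Cs^{-d/2}e^{-c\vert y\vert^2/s}$ for the continuous-time nearest-neighbour walk is only sharp in the diffusive regime $\vert y\vert\lesssim s$; for $\vert y\vert\gg s$ the true decay is Poissonian and even faster, so your estimate of $\sum_{z\neq 0}p_s^{\Z^d}(0,Nz)$ remains valid, but you should say a word about that regime rather than apply the Gaussian form literally for all $z$. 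Once the pointwise bound is in place, the separation into the constant/equilibrium contribution $CN^{-d}\lambda_t^{-1}=O(\alpha_t^{2-d})$ and the diffusive contribution $\int_0^\infty e^{-\lambda_t s}(1\wedge s^{-d/2})\,ds$ matches the paper's computation exactly and yields the three announced orders.
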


\begin{proof}
Applying theorems 3.3.15 and 2.3.1 in \cite{Saloff}, we know by Nash inequalities that $$\exists C>0 \text{ such that }\forall s>0,\ \left\vert p_s^{R\alpha_t}(0,0) -\frac{1}{(R\alpha_t)^d} \right\vert \leq \frac{C}{s^{d/2}}.$$ 
So
\begin{align*}
G_{R\alpha_t,\lambda_t}(0,0)&=\int_0^{+\infty} \exp(-s\lambda_t ) p_s^{R\alpha_t}(0,0)ds\\
&\leq 1 +\int_1^{+\infty} \exp(-s\lambda_t)\pare{\frac{1}{(R\alpha_t)^d}+\frac{C}{s^{d/2}}}ds\\
&\leq  1+\frac{1}{\lambda_t(R\alpha_t)^{d}}+\lambda_t^{d/2-1}\int_{\lambda_t}^{+\infty} \frac{C\exp(-u)}{u^{d/2}}du.
\end{align*}
As $\lambda_t\rightarrow 0$, for $t$ large enough,
\begin{align*}
G_{R\alpha_t,\lambda_t}(0,0)&\leq 1+\frac{1}{\lambda_t(R\alpha_t)^d} +C\lambda_t^{d/2-1}\pare{\int_{\lambda_t}^1 \frac{du}{u^{d/2}} +1}.
\end{align*}
Remember that $\lambda_t=a\alpha_t^{-2}$, then we have the result in the three cases.
\end{proof}

\subsection{Step 1: comparison with the SILT of the random walk on the torus stopped at an exponential
time}

\begin{lemma}
\label{tpsexp}
Let $\tau$ be an exponential time of parameter $\lambda_t=a\alpha_t^{-2}$.
Let $l_{R\alpha_t,\tau}(x)=\int_0^{\tau} 
\delta_x (X_s^{R\alpha_t}) \, ds$.
Then $\forall a,R,\alpha_t >0$: 

\begin{equation*}
P \cro{ N_p(l_t)\geq t r_t}
\leq e^{t\lambda_t} P \cro{ N_{p,R\alpha_t}(l_{R\alpha_t,\tau})\geq tr_t}. 
\end{equation*}
 \end{lemma}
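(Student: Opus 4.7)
The inequality has two independent ingredients: (i) projecting the walk to the torus can only increase the $p$-norm of the local time, and (ii) enlarging the time horizon from $t$ to an independent exponential time $\tau$ also only increases it. Combining them will yield the claim after paying the cost $e^{t\lambda_t}=1/P(\tau\geq t)$.

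\emph{Step 1: torus projection.} By the very definition of the projected walk $X^{R\alpha_t}$, one has, for every $x\in\T_{R\alpha_t}$ and every $s\geq 0$,
\[
\delta_x(X_s^{R\alpha_t})=\sum_{k\in\Z^d}\delta_{x+R\alpha_t k}(X_s),
\]
so integrating in $s$ yields $l_{R\alpha_t,s}(x)=\sum_{k\in\Z^d}l_s(x+R\alpha_t k)$. Since $p\geq 1$ and the $l_s(\cdot)$ are nonnegative, the super-additivity inequality $\pare{\sum_k a_k}^p\geq \sum_k a_k^p$ gives
\[
N_{p,R\alpha_t}^p(l_{R\alpha_t,s})=\sum_{x\in\T_{R\alpha_t}}\pare{\sum_{k\in\Z^d}l_s(x+R\alpha_t k)}^p\geq \sum_{y\in\Z^d}l_s(y)^p=N_p^p(l_s).
\]
In particular $\{N_p(l_t)\geq tr_t\}\subset\{N_{p,R\alpha_t}(l_{R\alpha_t,t})\geq tr_t\}$.

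\emph{Step 2: time extension.} The map $s\mapsto l_{R\alpha_t,s}(x)$ is non-decreasing for each $x$, hence so is $s\mapsto N_{p,R\alpha_t}(l_{R\alpha_t,s})$. Therefore on the event $\{\tau\geq t\}$ we have $N_{p,R\alpha_t}(l_{R\alpha_t,\tau})\geq N_{p,R\alpha_t}(l_{R\alpha_t,t})$, which combined with Step~1 gives the inclusion
\[
\{\tau\geq t\}\cap\{N_p(l_t)\geq tr_t\}\subset\{N_{p,R\alpha_t}(l_{R\alpha_t,\tau})\geq tr_t\}.
\]

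\emph{Step 3: conclusion.} Using the independence of $\tau$ and $(X_s,s\geq 0)$ and $P(\tau\geq t)=e^{-t\lambda_t}$, we take probabilities on both sides:
\[
e^{-t\lambda_t}P\cro{N_p(l_t)\geq tr_t}=P\pare{\tau\geq t,\,N_p(l_t)\geq tr_t}\leq P\cro{N_{p,R\alpha_t}(l_{R\alpha_t,\tau})\geq tr_t},
\]
and rearranging yields the announced inequality. There is no real obstacle: the only point to check carefully is the super-additivity step, which rests on $p\geq 1$ and on the nonnegativity of the local times.
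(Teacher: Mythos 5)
Your proof is correct and follows essentially the same route as the paper: the torus projection inequality (which you call super-additivity, the paper calls convexity — same elementary fact), monotonicity of local time in $s$ to replace $t$ by $\tau$ on $\{\tau \geq t\}$, and independence of $\tau$ from the walk together with $P(\tau\geq t)=e^{-t\lambda_t}$ to pay the exponential cost.
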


\begin{proof}
We deduce by convexity that
\begin{align*}
N_p^p(l_t) &= \sum_{x \in \Z^d} l_t^p(x) 
	 = \sum_{x \in \T_{R\alpha_t}} \sum_{k \in \Z^d} l^p_{t}(x+kR\alpha_t)\\
    & \leq  \sum_{x \in \T_{R\alpha_t}} \pare{\sum_{k \in \Z^d} l_t(x+kR\alpha_t)}^p
      =\sum_{x \in \T_{R\alpha_t}} l^p_{R\alpha_t,t}(x) = N_{p,R\alpha_t}^p(l_{R\alpha_t,t}).
\end{align*}

Then using the fact that $\tau\sim\mathcal{E} (\lambda_t)$ is independent of $(X_s,s\geq 0)$ we get:
\begin{align*}
P \cro{ N_p(l_t)\geq tr_t} \exp\pare{-t\lambda_t}
		&\leq P \cro{N_{p,R\alpha_t}(l_{R\alpha_t,t})\geq tr_t} P(\tau\geq t)\\
		&= \mathbb{P} \cro{ N_{p,R\alpha_t}(l_{R\alpha_t,t})\geq tr_t,\tau\geq t}\\
		&\leq P \cro{ N_{p,R\alpha_t}(l_{R\alpha_t,\tau})\geq tr_t}.
\end{align*}
Finally, $P \cro{ N_p(l_t)\geq tr_t}
\leq e^{t\lambda_t} P \cro{ N_{p,R\alpha_t}(l_{R\alpha_t,\tau})\geq tr_t} $.

\end{proof}

\subsection{Step 2: the Eisenbaum isomorphism theorem}
\begin{theo} (Eisenbaum, see for instance corollary 8.1.2 page 364 in \cite{MarcusRosen}). 
\label{Eisenbaum}
Let $\tau$ be as in lemma \ref{tpsexp}  and
let $(Z_x, x \in \T_{R\alpha_t})$ be a centered Gaussian process 
with covariance matrix $G_{R\alpha_t,\lambda_t}=E_x\cro{\int_0^\tau \delta_x(X_s^{R\alpha_t})ds}$ 
independent of $\tau$  and of the  random walk $(X_s, s \geq 0)$. 
For $s\neq 0$, consider the process 
$S_x := l_{R\alpha_t,\tau}(x) + \frac{1}{2} (Z_x+s)^2$, then for all measurable
and bounded function $F : \R^{\T_{R\alpha_t}} \mapsto \R$: 
\begin{equation*}
E\cro{F((S_x; x\in \T_{R\alpha_t}))}
= E\cro{F\pare{(\frac{1}{2}(Z_x +s)^2;x \in \T_{R\alpha_t})} \, \pare{1 + \frac{Z_0}{s}}}
\, . 
\end{equation*} 
\end{theo}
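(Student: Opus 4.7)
My plan is to reduce to a dense family of test functions $F$ and verify the identity there explicitly. A natural choice is the exponential family $F_\mu(y) = \exp(-\langle \mu, y\rangle)$ for $\mu \in \mathbb{R}_+^{\T_{R\alpha_t}}$: since the state space is finite and $l_{R\alpha_t,\tau}, (Z_x+s)^2 \geq 0$ while $Z_0 \in L^1$, both sides define finite signed measures in $F$, so uniqueness of Laplace transforms of signed measures on $\mathbb{R}_+^{\T_{R\alpha_t}}$ upgrades the identity to all bounded measurable $F$. Write $N := R\alpha_t$, $M := \mathrm{diag}(\mu)$, $G := G_{N, \lambda_t}$, and $\phi(z) := \exp(-\tfrac{1}{2}\sum_x \mu_x (z_x+s)^2)$ for brevity.

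For the left-hand side I exploit independence of $Z$ from $(X^N, \tau)$ to factor $\mathrm{LHS} = E_0[e^{-\langle \mu, l_{N,\tau}\rangle}] \cdot E[\phi(Z)]$. Conditioning on $\tau$ and applying the Feynman--Kac formula for the operator $-\Delta + M$ yields
$$E_0[e^{-\langle \mu, l_{N,\tau}\rangle}] = \lambda_t [(\lambda_t I - \Delta + M)^{-1}\mathbf{1}](0) = \lambda_t [G(I + MG)^{-1}\mathbf{1}](0).$$
For the right-hand side I handle the linear perturbation $1 + Z_0/s$ via Gaussian integration by parts. Stein's lemma gives $E[Z_i \phi(Z)] = -\sum_j G(i,j)\mu_j E[(Z_j+s)\phi(Z)]$, and setting $u_j := E[(Z_j+s)\phi(Z)]/E[\phi(Z)]$ this closes into the linear system $(I + GM)u = s\mathbf{1}$, whence $u = s(I+GM)^{-1}\mathbf{1}$. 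Substituting back yields
$$\mathrm{RHS} = E[\phi(Z)]\left(1 + \frac{u_0 - s}{s}\right) = E[\phi(Z)] \cdot [(I + GM)^{-1}\mathbf{1}](0).$$

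Matching LHS and RHS then reduces to the algebraic identity $\lambda_t G(I + MG)^{-1}\mathbf{1} = (I + GM)^{-1}\mathbf{1}$. The direct computation $(I+GM)G = G(I+MG)$ gives the resolvent swap $G(I+MG)^{-1} = (I+GM)^{-1}G$, after which the mass identity $\lambda_t G\mathbf{1} = \mathbf{1}$, reflecting $G\mathbf{1}(x) = E_x[\tau] = 1/\lambda_t$ on the conservative torus walk, finishes the argument. I expect the main obstacle to be bookkeeping: the two applications of Stein's lemma must close correctly into the system $(I+GM)u = s\mathbf{1}$ with the right signs, and the non-commutativity of $G$ and $M$ must be tracked carefully through the resolvent manipulations. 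Once established on the exponential family, extending to arbitrary bounded measurable $F$ is a routine approximation (truncation of the nonnegative variables followed by Stone--Weierstrass).
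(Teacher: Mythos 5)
The paper does not supply its own proof of Theorem~\ref{Eisenbaum}: it is quoted as a known result, with the reference to Marcus--Rosen (Corollary 8.1.2) standing in for the argument. So there is nothing in the paper to compare against line by line; what you have written is a self-contained proof of the cited result, and I will assess it on its own terms.

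Your argument is correct. The reduction to the Laplace-transform family $F_\mu(y)=\exp(-\langle\mu,y\rangle)$, $\mu\in\R_+^{\T_{R\alpha_t}}$, is legitimate: both $S$ and $\tfrac12(Z+s)^2$ take values in $\R_+^{\T_{R\alpha_t}}$, the weight $1+Z_0/s$ is integrable, and a finite signed measure on $\R_+^n$ is determined by its Laplace transform, which already upgrades the identity to all bounded measurable $F$ (the closing remark about truncation and Stone--Weierstrass is superfluous once you have invoked this). On the left, independence of $Z$ from $(X^{R\alpha_t},\tau)$ factors the expectation, and Feynman--Kac together with the exponential randomisation of the time horizon gives $E_0[e^{-\langle\mu,l_{R\alpha_t,\tau}\rangle}]=\lambda_t[(\lambda_tI-\Delta+M)^{-1}\mathbf 1](0)=\lambda_t[G(I+MG)^{-1}\mathbf 1](0)$, using $G^{-1}=\lambda_tI-\Delta$ on the torus (a fact the paper itself uses later in the proof of Lemma~\ref{Z}). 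On the right, Gaussian integration by parts with $\partial_j\phi=-\mu_j(z_j+s)\phi$ closes into $(I+GM)u=s\mathbf 1$ as you claim, yielding $\mathrm{RHS}=E[\phi(Z)]\,[(I+GM)^{-1}\mathbf 1](0)$. The resolvent identity $G(I+MG)^{-1}=(I+GM)^{-1}G$ and the conservation law $\lambda_tG\mathbf 1=\mathbf 1$ (valid here precisely because the torus walk is conservative, $E_x[\tau]=1/\lambda_t$) then match the two sides. This is essentially the standard Dynkin-type proof specialised to a finite state space with exponential killing, and it is a clean way to see why the weight $1+Z_0/s$ appears: it is exactly what is needed to convert $(I+MG)^{-1}$ into $(I+GM)^{-1}$ and absorb the prefactor $\lambda_tG\mathbf 1$. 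No gaps.
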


\subsection{Step 3: Comparison between $N_{p,R\alpha_t}(l_{R\alpha_t,\tau})$ and $N_{2p,R\alpha_t}(Z)$}
\begin{lemma}
\label{gauss}
Let $\tau$ and $(Z_x, x\in \T_{R\alpha_t})$ be defined
as in theorem \ref{Eisenbaum}. 
 For all $\epsilon >0$, there exists a constant $C(\epsilon)$ such that $\forall  a,R,\alpha_t,r_t>0$:
\begin{align*} 
P\pare{N_{p,R\alpha_t}(l_{R\alpha_t,\tau})\geq tr_t } 
\leq &C(\epsilon)\pare{1+\frac{(R\alpha_t)^{d/2p}}{\epsilon\sqrt{2\epsilon tr_t\lambda_t}}}\frac{P\pare{N_{2p,R\alpha_t}(Z)\geq \sqrt{2tr_t}(1+\circ(\epsilon))}^{1/(1+\epsilon)}}{P(N_{2p,R\alpha_t}(Z)\geq (1+\epsilon)\sqrt{2tr_t\epsilon})}.  
\end{align*}  
\end{lemma}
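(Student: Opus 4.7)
} My plan is to exploit the independence of the walk and the auxiliary Gaussian process $Z$ to produce the ratio structure in the bound, and then to invoke Eisenbaum's isomorphism on a carefully enlarged event. First I would introduce the $Z$-measurable auxiliary event
\[
C = \left\{N_{2p,R\alpha_t}(Z)\geq (1+\epsilon)\sqrt{2\epsilon tr_t}\right\}.
\]
Since $(l_{R\alpha_t,\tau}(x))_x$ and $(Z_x)_x$ are independent,
\[
P\bigl(N_{p,R\alpha_t}(l_{R\alpha_t,\tau})\geq tr_t\bigr)\cdot P(C) = P\bigl(\{N_{p,R\alpha_t}(l_{R\alpha_t,\tau})\geq tr_t\}\cap C\bigr),
\]
so dividing by $P(C)$ at the very end produces the denominator of the stated bound.

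The next step is to re-express this intersection as an event on the composed process $S_x=l_{R\alpha_t,\tau}(x)+\tfrac{1}{2}(Z_x+s)^2$ of Theorem \ref{Eisenbaum}. I would choose the shift $s = \epsilon\sqrt{2\epsilon tr_t}/(R\alpha_t)^{d/2p}$, so that the triangle inequality $N_{2p,R\alpha_t}(Z+s)\geq N_{2p,R\alpha_t}(Z) - s(R\alpha_t)^{d/2p}$ implies on $C$ the lower bound $N_{p,R\alpha_t}\bigl(\tfrac{1}{2}(Z+s)^2\bigr)\geq \epsilon tr_t$. Combined with $N_{p,R\alpha_t}(l_{R\alpha_t,\tau})\geq tr_t$ and the elementary pointwise inequality $(a+b)^p\geq a^p+b^p$ for $a,b\geq 0$ and $p\geq 1$ summed over $\T_{R\alpha_t}$, this yields
\[
N_{p,R\alpha_t}(S)^p \;\geq\; N_{p,R\alpha_t}(l_{R\alpha_t,\tau})^p + N_{p,R\alpha_t}\bigl(\tfrac{1}{2}(Z+s)^2\bigr)^p \;\geq\; (tr_t)^p(1+\epsilon^p).
\]
Hence $\{N_{p,R\alpha_t}(l_{R\alpha_t,\tau})\geq tr_t\}\cap C \subset \{N_{p,R\alpha_t}(S)\geq tr_t(1+\epsilon^p)^{1/p}\}$, a set depending on $S$ alone.

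Now I would apply Theorem \ref{Eisenbaum} with $F$ the indicator of this set, turning the probability into
\[
E\left[\mathbf{1}_{N_{2p,R\alpha_t}(Z+s)\geq \sqrt{2tr_t}(1+\epsilon^p)^{1/(2p)}}\Bigl(1+\tfrac{Z_0}{s}\Bigr)\right].
\]
Bounding $1+Z_0/s\leq 1+|Z_0|/s$ and applying H\"older's inequality with conjugate exponents $1+\epsilon$ and $(1+\epsilon)/\epsilon$ splits this expectation into a probability raised to the power $1/(1+\epsilon)$ times a Gaussian moment. Using the crude bound $\mathrm{Var}(Z_0)=G_{R\alpha_t,\lambda_t}(0,0) = E_0[\int_0^\tau \delta_0(X_s)ds]\leq E[\tau]= 1/\lambda_t$, a direct Gaussian computation then gives
\[
E\bigl[(1+|Z_0|/s)^{(1+\epsilon)/\epsilon}\bigr]^{\epsilon/(1+\epsilon)} \;\leq\; C(\epsilon)\left(1+\frac{(R\alpha_t)^{d/2p}}{\epsilon\sqrt{2\epsilon tr_t\lambda_t}}\right).
\]
A final triangle inequality converts $N_{2p,R\alpha_t}(Z+s)$ to $N_{2p,R\alpha_t}(Z)$ at the cost of shifting the threshold by $s(R\alpha_t)^{d/2p}=\epsilon\sqrt{2\epsilon tr_t}$; together with $(1+\epsilon^p)^{1/(2p)}=1+o(\epsilon)$ (which uses $p>1$), both corrections can be absorbed into the $(1+\circ(\epsilon))$ factor of the numerator.

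The delicate point is the calibration of the single parameter $s$: it must be small enough that the shift $s(R\alpha_t)^{d/2p}$ in the threshold is negligible at the scale $\sqrt{tr_t}$ relative to $\epsilon$, yet large enough that the Gaussian-moment factor $\sqrt{G}/s$ stays controlled. The choice $s=\epsilon\sqrt{2\epsilon tr_t}/(R\alpha_t)^{d/2p}$ is exactly the sweet spot; it also matches the threshold $\epsilon tr_t$ on $N_{p,R\alpha_t}((Z+s)^2/2)$ coming from $C$, so that the elementary $(a+b)^p\geq a^p+b^p$ estimate produces the $(1+\epsilon^p)^{1/p}$ factor with the desired asymptotic behaviour.
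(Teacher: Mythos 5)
Your proposal is correct and follows essentially the same route as the paper's proof: independence to form the ratio, the superadditivity $(a+b)^p\geq a^p+b^p$ pointwise in $x$, Eisenbaum's isomorphism with the same shift $s=\epsilon\sqrt{2\epsilon tr_t}/(R\alpha_t)^{d/2p}$, H\"older with exponents $1+\epsilon$ and $(1+\epsilon)/\epsilon$, the crude bound $\mathrm{Var}(Z_0)\leq 1/\lambda_t$, and triangle inequalities to pass between $Z$ and $Z+s$. The only cosmetic difference is ordering: you introduce the auxiliary event $C$ directly in terms of $Z$ and convert to $Z+s$ up front, while the paper works with $Z+s$ throughout and converts both numerator and denominator to $Z$ in one final step.
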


\begin{proof}
\begin{align*}
 S_x :=l_{R\alpha_t,\tau}(x) + \frac{1}{2} (Z_x+s)^2 &
\Rightarrow  S_x^p\geq l^p_{R\alpha_t,\tau}(x) + \pare{\frac{1}{2} (Z_x+s)^2}^p\\
&\Rightarrow  N_{p,R\alpha_t}^p (S) \geq N_{p,R\alpha_t}^p(l_{R\alpha_t,\tau}) +\frac{1}{2^p}N_{2p,R\alpha_t}^{2p}(Z+s).
\end{align*}
By independence of $(Z_x,x\in\T_{R\alpha_t})$ with the random walk $(X_s,s\geq 0)$ and the exponential time $\tau$, we have $\forall \epsilon >0$,
\begin{align}
 \nonumber & P\pare{N_{p,R\alpha_t}^p(l_{R\alpha_t,\tau})\geq t^p r_t^p } P\pare{\frac{1}{2^p}N_{2p,R\alpha_t}^{2p}(Z+s)\geq t^p r_t^p\epsilon^p}\\
=  &\nonumber P\pare{N_{p,R\alpha_t}^p(l_{R\alpha_t,\tau})\geq t^pr_t^p, \frac{1}{2^p}N_{2p,R\alpha_t}^{2p}(Z+s)\geq t^pr_t^p\epsilon^p}\\
\leq & \nonumber P\pare{N_{p,R\alpha_t}^p(l_{R\alpha_t,\tau})+\frac{1}{2^p}N_{2p,R\alpha_t}^{2p}(Z+s)\geq t^pr_t^p(1+\epsilon^p)}\\
\leq &\nonumber P\pare{N_{p,R\alpha_t}^p(S)\geq t^pr_t^p(1+\epsilon^p)}\\
= &\label{PPE} E\cro{\pare{1+\frac{Z_0}{s}}; \frac{1}{2^p}N_{2p,R\alpha_t}^{2p}(Z+s)\geq t^pr_t^p(1+\epsilon^p)}
\end{align}
where the last equality comes from Theorem \ref{Eisenbaum}.
Moreover by H\"older's inequality, $\forall \epsilon >0$,

\begin{align}
\nonumber
&E\cro{\pare{1+\frac{Z_0}{s}} ; \frac{1}{2^p}N_{2p,R\alpha_t}^{2p}(Z+s)\geq t^pr_t^p(1+\epsilon^p)}\\
\label{EexpE}\leq &E\cro{\va{1+\frac{Z_0}{s}}^{1+1/\epsilon}}^{\epsilon/(1+\epsilon)}P\pare{N_{2p,R\alpha_t}^{2p}(Z+s)\geq 2^pt^pr_t^p(1+\epsilon^p)}^{1/(1+\epsilon)}.
\end{align}

Combining  (\ref{PPE}) and (\ref{EexpE}) we obtain that $\forall a,\epsilon >0$,
\begin{equation}
 \label{PexpEP}
 P\pare{N_{p,R\alpha_t}(l_{R\alpha_t,\tau})\geq tr_t } 
\leq E\cro{\va{1+\frac{Z_0}{s}}^{1+1/\epsilon}}^{\epsilon/(1+\epsilon)}
\frac{P\pare{N_{2p,R\alpha_t}(Z+s)\geq \sqrt{2tr_t}(1+\circ(\epsilon))}^{1/(1+\epsilon)}}{P(N_{2p,R\alpha_t}(Z+s)\geq \sqrt{2tr_t\epsilon})}  .
\end{equation}

Then using the fact that $Var(Z_0)=G_{R\alpha_t,\lambda_t}(0,0)\leq E[\tau]=\frac{1}{\lambda_t}$,
\begin{align}
\label{EEexp}
P\pare{N_{p,R\alpha_t}(l_{R\alpha_t,\tau})\geq tr_t } 
\leq &C(\epsilon)\pare{1+\frac{1}{s\sqrt{\lambda_t}}}\frac{P\pare{N_{2p,R\alpha_t}(Z+s)\geq \sqrt{2tr_t}(1+\circ(\epsilon))}^{1/(1+\epsilon)}}{P(N_{2p,R\alpha_t}(Z+s)\geq \sqrt{2tr_t\epsilon})}  .
\end{align}

Choosing $s=\frac{ \epsilon\sqrt{2tr_t\epsilon}}{(R\alpha_t)^\frac{d}{2p}} $, using triangle inequality and the fact that $N_{2p,R\alpha_t}(s)=s(R\alpha_t)^{\frac{d}{2p}}$,  we have:
\begin{align*} 
P\pare{N_{p,R\alpha_t}(l_{R\alpha_t,\tau})\geq tr_t } 
\leq &C(\epsilon)\pare{1+\frac{(R\alpha_t)^{d/2p}}{\epsilon\sqrt{2\epsilon tr_t\lambda_t}}}\frac{P\pare{N_{2p,R\alpha_t}(Z)\geq \sqrt{2tr_t}(1+\circ(\epsilon))}^{1/(1+\epsilon)}}{P(N_{2p,R\alpha_t}(Z)\geq (1+\epsilon)\sqrt{2tr_t\epsilon})}  
\end{align*}  

\end{proof}

\subsection{Step 4: Large deviations for $N_{2p,R}(Z)$}

\begin{lemma}
\label{Z}
Let $\tau$ and $(Z_x, x\in \T_{R\alpha_t})$ be defined
as in theorem \ref{Eisenbaum}, and
$\rho_1(a,R,t)$ be defined as in proposition \ref{limcste}.
Under assumptions of proposition \ref{limcste} and theorem \ref{theo},

\begin{enumerate}
\item $\forall a,R,t>0$, $\lambda_t\leq \rho_1(a,R,t)\leq aR^{d/q}\alpha_t^{d/q-2}$.
\item $\forall a,\epsilon,R,t>0$, 
\begin{equation*} 
P\cro{N_{2p,R\alpha_t} (Z)\geq \sqrt{tr_t \epsilon}}
\geq 
 \frac{1}{\sqrt{2 \pi tr_t \epsilon \rho_1(a,R,t)}} 
\pare{1-\frac{1}{tr_t \epsilon\rho_1(a,R,t)}} 
\exp\pare{- \frac{1}{2}tr_t \epsilon \rho_1(a,R, t)} .
\end{equation*} 
\item  $\forall a,\epsilon,R,t>0$,
\begin{align*} 
&P\pare{N_{2p,R\alpha_t}(Z)\geq \sqrt{2tr_t}(1+\circ(\epsilon)) }\\
\leq & \frac{\sqrt{2}}{(\sqrt{2tr_t}(1+\circ(\epsilon))+\circ(\sqrt{tr_t}))\sqrt{\pi\rho_1(a,R,t)}}\exp\pare{-\frac{\rho_1(a,R,t)\pare{\sqrt{2tr_t}(1+\circ(\epsilon))+\circ(\sqrt{tr_t})}^2}{2}}
.
\end{align*} 
\end{enumerate} 
\end{lemma}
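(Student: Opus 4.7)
Part 1 is elementary. For the upper bound, I test the variational problem defining $\rho_1(a,R,t)$ on the constant function $h\equiv (R\alpha_t)^{-d/(2p)}$: it satisfies $N_{2p,R\alpha_t}(h)=1$, $\tilde\nabla h\equiv 0$ and $N_{2,R\alpha_t}^2(h)=(R\alpha_t)^{d/q}$, hence $\rho_1(a,R,t)\le \lambda_t(R\alpha_t)^{d/q}=aR^{d/q}\alpha_t^{d/q-2}$. For the lower bound, the embedding $N_{2p,R\alpha_t}(h)\le N_{2,R\alpha_t}(h)$ on counting measure (valid since $2p\ge 2$) forces $N_{2,R\alpha_t}^2(h)\ge 1$ for any admissible $h$, giving $\rho_1(a,R,t)\ge\lambda_t$.

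For Parts 2 and 3, both bounds reduce to Mills-ratio estimates for a one-dimensional centred Gaussian of variance $1/\rho_1(a,R,t)$. The common mechanism is H\"older duality on $\T_{R\alpha_t}$: writing $r$ for the conjugate of $2p$,
$$N_{2p,R\alpha_t}(Z)=\sup\bigl\{\langle Z,\phi\rangle:N_{r,R\alpha_t}(\phi)\le 1\bigr\},$$
combined with the Fenchel-type identification
$$\sup_{N_{r,R\alpha_t}(\phi)\le 1}\langle\phi,G_{R\alpha_t,\lambda_t}\phi\rangle=\frac{1}{\rho_1(a,R,t)},$$
which expresses that the Dirichlet form $\lambda_t\|\cdot\|_2^2+\tfrac12\|\tilde\nabla\cdot\|_2^2$ against $\ell^{2p}$ and the Green form $\langle\cdot,G_{R\alpha_t,\lambda_t}\cdot\rangle$ against $\ell^r$ are Legendre duals. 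For Part 2 I retain a single near-extremiser $\phi$ in the supremum: then $N_{2p,R\alpha_t}(Z)\ge\langle Z,\phi\rangle/N_{r,R\alpha_t}(\phi)$ is a centred Gaussian of variance arbitrarily close to $1/\rho_1(a,R,t)$, and Mills' lower bound
$$P\bigl(\mathcal N(0,\sigma^2)\ge c\bigr)\ge \frac{\sigma}{c\sqrt{2\pi}}\Bigl(1-\frac{\sigma^2}{c^2}\Bigr)e^{-c^2/(2\sigma^2)}$$
at $c=\sqrt{tr_t\epsilon}$, $\sigma^2=1/\rho_1(a,R,t)$ produces exactly the claimed inequality.

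For Part 3 I view $N_{2p,R\alpha_t}(\cdot)$ as a norm on the Gaussian vector $(Z_x)_{x\in\T_{R\alpha_t}}$, whose intrinsic Gaussian Lipschitz constant is $\sigma_{\ast}=\rho_1(a,R,t)^{-1/2}$ by the same duality identity. Ehrhard's sharpening of the Borell--TIS inequality then yields
$$P\bigl(N_{2p,R\alpha_t}(Z)\ge \mathbb E[N_{2p,R\alpha_t}(Z)]+u\bigr)\le P\bigl(\mathcal N(0,1)\ge u\sqrt{\rho_1}\bigr)\le \frac{1}{u\sqrt{2\pi\rho_1}}\exp\!\Bigl(-\frac{\rho_1 u^2}{2}\Bigr),$$
which is the claim on writing the threshold $\sqrt{2tr_t}(1+\circ(\epsilon))$ as $\mathbb E[N_{2p,R\alpha_t}(Z)]+u$ and absorbing $\mathbb E[N_{2p,R\alpha_t}(Z)]$ into the $\circ(\sqrt{tr_t})$ correction in $M'$. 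The expectation is bounded by Jensen, $\mathbb E[N_{2p,R\alpha_t}(Z)]\le \bigl(c_p|\T_{R\alpha_t}|\,G_{R\alpha_t,\lambda_t}(0,0)^p\bigr)^{1/(2p)}$, and combining with Lemma~\ref{G} one checks that the scaling assumptions on $r_t$ in Theorem~\ref{theo} are precisely those making this expectation $\circ(\sqrt{tr_t})$ in every dimension.

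The main technical point is the duality identification $\sup_{N_r(\phi)\le 1}\langle\phi,G_{R\alpha_t,\lambda_t}\phi\rangle=1/\rho_1(a,R,t)$: it is a routine Legendre--Fenchel duality in the continuum, but on the discrete torus one has to check carefully that the factor $\tfrac12$ in front of $\|\tilde\nabla\cdot\|_2^2$ in the definition of $\rho_1$ is correctly matched against the normalisation of the Eisenbaum covariance $G_{R\alpha_t,\lambda_t}$ and against the killing rate $\lambda_t$. Once this is in place, Parts 2 and 3 follow immediately from the scalar Mills estimates above.
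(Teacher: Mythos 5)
Your proposal tracks the paper's argument closely, and all three parts are essentially correct; let me flag where the two diverge and where you left a step unwritten.

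Part~1 is identical to the paper's (the paper derives $N_{2}^2(h)\geq 1$ from $|h(x)|\le 1$ rather than from the $\ell^2\hookrightarrow\ell^{2p}$ embedding, but the conclusion and the test function for the upper bound are the same).

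In Part~2 you correctly identify the crux: the identity
\[
\sup_{N_{(2p)',R\alpha_t}(\phi)\le 1}\langle\phi,G_{R\alpha_t,\lambda_t}\phi\rangle_{R\alpha_t}=\frac{1}{\rho_1(a,R,t)},
\]
which the paper calls $\rho_2(a,R,t)=1/\rho_1(a,R,t)$. But you only assert it as ``routine Legendre--Fenchel duality.'' This is the one nontrivial computation in the whole lemma, and the paper does spend effort on it: the inequality $\rho_2\le 1/\rho_1$ is obtained by combining H\"older with the representation $G^{-1}=\lambda_t-\Delta$, which turns $\langle f,Gf\rangle$ into the Dirichlet form evaluated at $Gf$ and then back into $\rho_1\,N_{2p,R\alpha_t}^2(Gf)$; the reverse inequality $\rho_2\ge 1/\rho_1$ is obtained by testing with $G^{-1}f_0$ for an extremiser $f_0$ of $\rho_1$ and using Lagrange multipliers to evaluate $N_{(2p)',R\alpha_t}^2(G^{-1}f_0)=\rho_1$. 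Because the two sides of the duality live on different $\ell^p$ balls (Dirichlet form constrained by $\ell^{2p}$, Green form constrained by $\ell^{(2p)'}$), the argument is not a formal Legendre transform, so this should be worked out rather than deferred.

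In Part~3 you substitute a mean-centred Borell--TIS/Ehrhard bound for the paper's median-centred Ledoux--Talagrand concentration inequality (Lemma 3.1 of \cite{LT}). This is a legitimate equivalent route: you control $\mathbb{E}[N_{2p,R\alpha_t}(Z)]$ by Jensen, the paper controls the median $M$ by $M\le 2\mathbb{E}[N_{2p,R\alpha_t}(Z)]$, and both land on the same bound $C(p)(R\alpha_t)^{d/p}G_{R\alpha_t,\lambda_t}(0,0)$ to be fed into Lemma~\ref{G}, giving the same verification that the scaling assumptions on $r_t$ make this $\circ(tr_t)$ in every dimension. Your Gaussian Lipschitz constant $\sigma_*=\rho_1^{-1/2}$ is correct and follows from the same $\rho_2=1/\rho_1$ identity. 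One small caveat: the paper's median-centred bound carries a factor $2$ giving the $\sqrt{2}/\sqrt{\pi}$ prefactor exactly as stated in the lemma; a one-sided mean-centred bound with the sharp Gaussian tail gives a factor $1/\sqrt{2\pi}$, which still implies the stated inequality but requires citing the correct sharp form (not the elementary Borell--TIS tail $e^{-u^2/2}$, which would lose the $1/u$ prefactor).
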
 

\begin{proof}
\begin{enumerate}
\item 
For the upper bound, it suffices to 
take $f=(R\alpha_t)^{-d/2p}$ to obtain the result.
For the lower bound, we remark that $N_{2p,R\alpha_t}(h)=1$ implies that for all $x\in\T_{R\alpha_t}, \ \vert h(x)\vert \leq 1$, and then $N_{2p,R\alpha_t}^{2p}(h)\leq N_{2,R\alpha_t}^2(h)$. Therefore $\rho_1(a,R,t)\geq \lambda_t$.
\item By H\"older's inequality, for any $f$ such that $\|f\|_{(2p)',R\alpha_t}=1$,
\[
P\cro{N_{2p,R\alpha_t} (Z)\geq \sqrt{tr_t \epsilon}}
 \geq P \cro{\sum_{x \in \T_{R\alpha_t}} f_x Z_x \geq \sqrt{ tr_t  \epsilon} }
\, .
\]
Since $\sum\limits_{x \in \T_{R\alpha_t}} f_x Z_x$ is a real centered Gaussian variable with
variance 
\[ \sigma^2_{a,R,t}(f)= 
\sum_{x,y  \in \T_{R\alpha_t}} G_{R\alpha_t,\lambda_t}(x,y) f_x f_y \, ,
\]
we have:
\begin{eqnarray*} 
P\cro{\nor{Z}_{2p,R\alpha_t} \geq  \sqrt{tr_t \epsilon}}
& \geq & 
\frac{\sigma_{a,R,t}(f)}{\sqrt{2\pi} \sqrt{tr_t \epsilon}} 
\pare{1 - \frac{\sigma^2_{a,R,t}(f)}{tr_t \epsilon}} 
\exp\pare{- \frac{tr_t \epsilon }{2 \sigma^2_{a,R,t}(f)}} 
 \\
& \geq & 
\frac{\sigma_{a,R,t}(f)}{\sqrt{2\pi} \sqrt{tr_t\epsilon}} 
\pare{1 - \frac{\rho_2(a,R,t)}{tr_t \epsilon}} 
\exp\pare{- \frac{tr_t\epsilon }{2 \sigma^2_{a,R,t}(f)}},
\end{eqnarray*}
where $\rho_2(a,R,t)=\sup\acc{\sigma^2_{a,R,t}(f), N_{(2p)',R\alpha_t}(f)=1}$.
Taking the supremum over $f$ we obtain that $\forall a,R,t,\epsilon >0$,
$$
P\cro{N_{2p,R\alpha_t} (Z)\geq \sqrt{tr_t \epsilon}}
\geq 
 \frac{\sqrt{\rho_2(a,R, t)}}{\sqrt{2 \pi tr_t \epsilon }} 
\pare{1-\frac{\rho_2(a,R,t)}{tr_t\epsilon}} 
\exp\pare{- \frac{tr_t \epsilon }{2 \rho_2(a,R, t)}}. 
$$ 
Then it suffices to prove that $\rho_2(a,R,t)=\frac{1}{\rho_1(a,R,t)}$ to have the result.

We denote by $<\cdot ,\cdot>_{R\alpha_t}$ the scalar product on $l^2(\T_{R\alpha_t})$. On one hand, by H\"older inequality, 
$$<f,G_{R\alpha_t,\lambda_t}f>_{R\alpha_t}\leq N_{2p,R\alpha_t}(G_{R\alpha_t,\lambda_t}f),\ \forall f \text{ such that }N_{(2p)',R\alpha_t}(f)=1.$$ 
Since $G_{R\alpha_t,\lambda_t}^{-1}=\lambda_t -\Delta$,
\begin{align*}
<f,G_{R\alpha_t,\lambda_t}f>_{R\alpha_t}&=<G_{R\alpha_t,\lambda_t}^{-1}G_{R\alpha_t,\lambda_t}f,G_{R\alpha_t,\lambda_t}f>_{R\alpha_t}\\
&= \lambda_tN_{2,R\alpha_t}^2(G_{R\alpha_t,\lambda_t}f)+\frac{1}{2}N_{2,R\alpha_t}^2(\nabla G_{R\alpha_t,\lambda_t} f)\\
&\geq \rho_1(a,R,t)N_{2p,R\alpha_t}^2(G_{R\alpha_t,\lambda_t}f).
\end{align*}
Therefore, for all $f$ such that $N_{(2p)',R\alpha_t}(f)=1$, $<f,G_{R\alpha_t,\lambda_t}f>_{R\alpha_t}^2\leq \frac{<f,G_{R\alpha_t,\lambda_t}f>_{R\alpha_t}}{\rho_1(a,R,t)}$. Then, taking the supremum over $f$, $\rho_2(a,R,t)\leq 1/\rho_1(a,R,t)$. 

On the other hand, let $f_0$ realizing the infimum in the definition of $\rho_1(a,R,t)$. 
\begin{align*}
\rho_2(a,R,t)&= \sup_{N_{(2p)',R\alpha_t}(f)=1} \acc{<f,G_{R\alpha_t,\lambda_t}f>_{R\alpha_t}}\\
&\geq \frac{<G_{R\alpha_t,\lambda_t}^{-1}f_0,f_0>_{R\alpha_t}}{N_{(2p)',R\alpha_t}^2(G_{R\alpha_t,\lambda_t}^{-1}f_0)}
= \frac{\rho_1(a,R,t)}{N_{(2p)',R\alpha_t}(G_{R\alpha_t,\lambda_t}^{-1}f_0)}.
\end{align*}
Furthermore, using the Lagrange multipliers method, we know that $N_{(2p)',R\alpha_t}^2(G_{R\alpha_t,\lambda_t}^{-1}h_0)=\rho_1(a,R,t)$. Hence $\rho_2(a,R,t)\geq 1/\rho_1(a,R,t)$, and then
$\rho_2(a,R,t)= 1/\rho_1(a,R,t)$. 
\item
Let $M$ be a median of $N_{2p,R\alpha_t}(Z)$. We can easily see that
\begin{equation}
\label{EEexp2}
P\pare{N_{2p,R\alpha_t}(Z)\geq \sqrt{2tr_t}(1+\circ(\epsilon)) }
\leq P\pare{\va{N_{2p,R\alpha_t}(Z)-M}\geq \sqrt{2tr_t}(1+\circ(\epsilon))-M}.
 \end{equation}

Using concentration inequalities for norms of Gaussian processes (see for instance lemma 3.1 in \cite{LT}), $\forall u > 0$, 

$P\cro{\va{N_{2p,R\alpha_t} (Z)- M} \geq \sqrt{u}} 
\leq 2 P( Y \geq \sqrt{\frac{u}{\rho_2(a, R, t)} }) $
where $Y\sim \mathcal{N}(0,1)$. Then for $tr_t\gg M^2$,
\begin{align}
\nonumber & P\pare{\va{N_{2p,R\alpha_t}(Z)-M}\geq \sqrt{2tr_t}(1+\circ(\epsilon))-M}\\
\nonumber \leq & 2P\pare{Y\geq \frac{\sqrt{2tr_t}(1+\circ(\epsilon))-M}{\sqrt{\rho_2(a,R,t)}}}\\
 \leq &\frac{2\sqrt{\rho_2(a,R,t)}}{(\sqrt{2tr_t}(1+\circ(\epsilon))-M)\sqrt{2\pi}}\exp\pare{-\frac{\pare{\sqrt{2tr_t}(1+\circ(\epsilon))-M}^2}{2\rho_2(a,R,t)}}.
\end{align}
Let us now prove that under our assumptions we have $tr_t\gg M^2$.
Since $M=(\text{median}(\sum\limits_{x\in\T_{R\alpha_t}} Z_x^{2p}))^{1/2p}$ and that for $X\geq 0,\ \text{ median}(X)\leq 2 E[X]$, we get:
\begin{align*}
M^2 &=(\text{median}(\sum_{x\in\T_{R\alpha_t}} Z_x^{2p}))^{1/p}\\
&\leq (2E[\sum_{x\in\T_{R\alpha_t}} Z_x^{2p}])^{1/p}\\
&\leq C(p)(\sum_{x\in\T_{R\alpha_t}} G_{R\alpha_t,\lambda_t}(0,0)^p E[Y^{2p}])^{1/p},\text{ where } Y\sim\mathcal{N}(0,1)\\
&\leq C(p)(R\alpha_t)^{d/p}G_{R\alpha_t,\lambda_t}(0,0)(E[Y^{2p}])^{1/p}\\
&\leq C(p)(R\alpha_t)^{d/p}G_{R\alpha_t,\lambda_t}(0,0).
\end{align*}
Recall that we have $\lambda_t=a\alpha_t^{-2}$ and $\alpha_t=r_t^{-q/d}$. \\
For $d=1$, by lemma \ref{G}, $M^2 = O (\alpha_t^{1+1/p})=O(r_t^{-\frac{p+1}{p-1}})$. Then
as  $r_t\gg \frac{1}{t^{1/2q}}$ we have $M^2\ll tr_t$.\\
For $d=2$, by lemma \ref{G}, $M^2=O(\alpha_t^{2/p}\log\alpha_t)=O\pare{r_t^{-1/(p-1)}\log\frac{1}{r_t}}$. Then as $r_t\gg\pare{\frac{\log t}{t}}^{1/q}$ we have $M^2\ll tr_t$.\\
For $d\geq 3$, by lemma \ref{G}, $M^2\leq C\alpha_t^{d/p}=Cr_t^{-1/(p-1)}$. Then as 
$r_t\gg t^{-1/q}$, we have $M^2\ll tr_t$.

\end{enumerate}
\end{proof}

\subsection{End of proof of the upper bound in theorem \ref{theo}}
Combining Lemma  \ref{tpsexp} and Lemma \ref{gauss} we have proved that: $\forall \epsilon,a,R,t>0$,
\begin{equation}
 P \cro{ N_p(l_t)\geq tr_t} 
\leq \label{PexpEP2}C(\epsilon)\exp(t\lambda_t)\pare{1+\frac{(R\alpha_t)^\frac{d}{2p}}{\epsilon \sqrt{2\epsilon tr_t \lambda_t}}} 
\frac{P\pare{N_{2p,R\alpha_t}(Z)\geq \sqrt{2tr_t}(1+\circ(\epsilon))}^{1/(1+\epsilon)}}
{P\cro{ N_{2p,R}(Z) \geq 2 \sqrt{2tr_t\epsilon}}}.
\end{equation}

First we look for an upper bound for the numerator in (\ref{PexpEP2}).
By 1 and 3 of lemma \ref{Z}, we have that
   \begin{equation}
   \label{limE}
   \limsup_t \frac{1}{tr_t^{2q/d}}\log P\pare{N_{2p,R\alpha_t}(Z)\geq \sqrt{2tr_t}(1+\circ(\epsilon))}^{1/(1+\epsilon)} \leq -\liminf\limits_{t\rightarrow +\infty} r_t^{1-2q/d}\rho_1(a,R,t)(1+\circ(\epsilon)).
   \end{equation}
  
 Now we work on the denominator in (\ref{PexpEP2}). Using 1 and 2 of lemma \ref{Z}, we obtain:
 \begin{align*}
   P\cro{ N_{2p,R}(Z) \geq 2 \sqrt{2tr_t\epsilon}}
&\geq 
 \frac{1}{\sqrt{16 \pi tr_t \epsilon \rho_1(a,R,t)}} 
\pare{1-\frac{1}{8tr_t \epsilon\rho_1(a,R,t)}} 
\exp\pare{- 4tr_t \epsilon \rho_1(a,R, t)} \\
& \geq \frac{1}{\sqrt{16 \pi ta \epsilon R^{d/q}r_t^{2q/d}}} 
\pare{1-\frac{1}{8tr_t\epsilon\lambda_t}} 
\exp\pare{- 4 \epsilon atr_t^{2q/d}R^{d/q}}
   \end{align*}
   Therefore,
\begin{equation}
\label{limP}
   \liminf_t \frac{1}{tr_t^{2q/d}}\log   P\cro{ N_{2p,R}(Z) \geq 2 \sqrt{2tr_t\epsilon}} \geq -4a\epsilon R^{d/q}.
   \end{equation}
     
  Now we  combine (\ref{PexpEP2}),(\ref{limE}),(\ref{limP}) to have:   
$$\limsup_t \frac{1}{tr_t^{2q/d}} \log P\pare{N_p(l_t)\geq tr_t} \leq a - (1+\circ(\epsilon))\liminf_t r_t^{1-2q/d}\rho_1(a,R,t)+4a\epsilon R^{d/q-2}.$$

Then we let $\epsilon\rightarrow 0$:
$$\limsup_t \frac{1}{tr_t^{2q/d}} \log P\pare{N_p(l_t)\geq tr_t} \leq a - \liminf_t r_t^{1-2q/d}\rho_1(a,R,t).$$
Then we take the limit over $R$ using proposition \ref{limcste}:
$$\limsup_t \frac{1}{tr_t^{2q/d}} \log P\pare{N_p(l_t)\geq tr_t} \leq a - \rho(a).$$

 We finish the proof  taking the infimum over $a>0$ and using proposition \ref{cste}.

\section{Proof of the lower bound of theorem \ref{theo}}

\begin{proof}

Let $\forall x\in\R^d$, $L_t = \frac{\alpha_t^d}{t} l_t(\lfloor \alpha_t x\rfloor)$ be the rescaled version of $l_t$. Thanks to the work of Gantert, K\"onig and Shi (lemma 3.1 in \cite{GKS}) we know that for $R>0$, under the sub-probability measure $P(\cdot , supp(L_t)\subset [-R,R]^d)$, $L_t$ satisfies a large deviations principle on \\$\mathcal{F}=\acc{\psi \ such\ that\ \nor{\psi}_2=1, supp(\psi)\subset [-R,R]^d}$ with rate function $\mathcal{I}(\psi)=\frac{1}{2}\nor{\nabla \psi}_2^2$ and speed $t\alpha_t^{-2}$.

So for $r_t=\alpha_t^{-d/q}$,
\begin{align*}
P(N_p(l_t)\geq tr_t)&=P(\nor{L_t}_p\geq 1)\\
&\geq P(\nor{L_t}_p>1,supp(L_t)\subset [-R,R]^d).
\end{align*}
Then, as $\nu\rightarrow\nor{\nu}_p=\sup\acc{\int\limits_{\R^d}f(x)d\nu(x),\nor{f}_q=1}$ is a lower semi-continuous function in $\tau$-topology, we have,
$$
\liminf_{t\rightarrow +\infty}\frac{1}{tr_t^{2q/d}}\log P(N_p(l_t)\geq tr_t)
\geq -\inf\acc{\frac{1}{2}\nor{\nabla\psi}_2^2,\nor{\psi}_2=1,\nor{\psi}_{2p}>1,supp(\psi)\subset [-R,R]^d}
.$$
Let $R\rightarrow +\infty$,
\begin{align*}
\liminf_{t\rightarrow +\infty}\frac{1}{tr_t^{2q/d}}\log P(N_p(l_t)\geq tr_t)
&\geq -\inf\acc{\frac{1}{2}\nor{\nabla\psi}_2^2,\nor{\psi}_2=1,\nor{\psi}_{2p}>1}\\
&= -\inf\acc{\frac{1}{2}\nor{\nabla\psi}_2^2,\nor{\psi}_2=\nor{\psi}_{2p}=1}.
\end{align*}

\end{proof}

\section{Proof of propositions \ref{limcste} and \ref{cste}}

We denote by $\mathfrak{S}_d$ the set of the permutations on $\acc{1,...,d}$, by  $\lfloor\cdot\rfloor$ the integer part, by $B(s)$ the ball of radius $s$ and by $Vol(B(s))$ its volume.\\
$ $\\
\textbf{Proof of proposition \ref{limcste}:}\\
Let choose a sequence $(R_n,t_n,h_n)$ such that $R_n\rightarrow +\infty$, $t_n\rightarrow +\infty$,$N_{2p,R\alpha_{t_n}}(h_n)=1$ and such that
\begin{align*}
 &\liminf_{R\rightarrow +\infty}\liminf_{t\rightarrow + \infty} \inf\acc{\frac{a}{\alpha_t^{d/q}}N_{2,R\alpha_t}^2(h)+\frac{1}{2}\alpha_t^{2-d/q}N_{2,R\alpha_t}^2(\tilde\nabla h),N_{2p,R\alpha_t}(h)=1}\\
 \geq &\frac{a}{\alpha_{t_n}^{d/q}}  N_{2,R_n\alpha_{t_n}}^2(h_n)+\frac{1}{2}\alpha_{t_n}^{2-d/q}N_{2,R_n\alpha_{t_n}}^2 (\tilde\nabla h_n)-\frac{1}{n}.
 \end{align*}
 $h_n$ is a sequence of functions defined on $\T_{R_n\alpha_{t_n}}$. We want to extend these functions to $\mathbb{R}^d$. In this perspective we split $\mathbb{Z}^d$ into cubes $C(k)=\times_{i=1}^d[k_i,k_i+1]$ for any $k\in\mathbb{Z}^d$. Then, each cube $C(k)$ is again splitted into $d!$ tetrahedra $T_\sigma(k)$, where for any $\sigma\in\mathfrak{S}_d$, $T_\sigma(k)$ is the convex hull of $k,k+e_{\sigma(1)},...,k+e_{\sigma(1)}+\cdot\cdot\cdot +e_{\sigma(d)}$. For any $y\in\mathbb{R}^d$ we denote by $\sigma(y)$ the unique permutation which defined the tetrahedra $T_{\sigma(y)}(\lfloor y\rfloor)$ where lives $y$. 
 Set  for any $x\in \R^d$,  
 \begin{equation}
 \label{defgn}g_n(x)=\alpha_{t_n}^{d/2p} h_n(\lfloor \alpha_{t_n} x\rfloor)+\alpha_{t_n}^{d/2p}\sum_{i=1}^d f_{n,\sigma(\alpha_{t_n} x),i} (\alpha_{t_n} x),
 \end{equation}
 where $\forall y\in\R^d$, $\forall i\in\acc{1,...,d}$, $\forall \sigma\in\mathfrak{S}^d$, 
 $$f_{n,\sigma,i}(y)=\pare{h_n(\lfloor y\rfloor+e_{\sigma(1)}+...+e_{\sigma(i)})-h_n(\lfloor y\rfloor+e_{\sigma(1)}+...+e_{\sigma(i-1)})} (y_{\sigma(i)}-\lfloor y_{\sigma(i)}\rfloor).$$
 
Following the work of Becker and K\"onig, it can be proved that $g_n$ is well-defined continuous and $R_n$-periodic. Now we set $\Psi_{R_n}$ a truncation function that verify 
$\Psi_{R_n}=\bigotimes\limits_{i=1}^{d} \psi_{R_n}:\R ^d\rightarrow [0,1]$, where $\psi_{R_n}=
\left\{ \begin{array}{ll}
	0 \text{ outside }[-R_n,R_n]\, 
	\\
	  \text{linear in } [-R_n,-R_n+R_n^\epsilon]\text{ and in }[R_n-R_n^\epsilon,R_n]\,
	  \\
	  1 \text{ in } [-R_n+R_n^\epsilon, R_n-R_n^\epsilon] .
	\end{array} 
\right.$ \\
The function $\frac{g_n \Psi_{R_n}}{\nor{g_n \Psi_{R_n}}_{2p}}$ is 
our candidate to realize the infimum in the definition of $\rho(a)$. Therefore we have to bound from below $N_2^2(\tilde\nabla h_n)$ by $\nor{\nabla (g_n\Psi_n)}_2^2$
and $N_2^2(h_n)$ by $\nor{ (g_n\Psi_n)}_2^2$. 
\\

Let us first work on the norm of the gradient.
Thanks to the definition (\ref{defgn}) of $g_n$, it is easy to see that  
\begin{equation}
\label{nggn}\nor{\nabla g_n}_{2,R_n}^2=\alpha_{t_n}^{2-d/q}N_{2,R_n\alpha_{t_n}}^2(\tilde\nabla h_n).
\end{equation}
 By the work of Becker and K\"onig, we know that 
 \begin{equation}
\label{mnggp}\nor{\nabla (g_n \Psi_{R_n})}_2^2 \leq \pare{1+\frac{1}{R_n^\epsilon}}\nor{\nabla g_n}_{2,R_n}^2+\frac{2}{R_n^\epsilon} \nor{ g_n}_{2,R_n}^2.
\end{equation}
Then we have to bound from above $\nor{g_n}_{2,R_n}^2$. Using another time the definition (\ref{defgn}) of $g_n$  and the triangle inequality, we have that 
\begin{equation}
\label{majnorgn}
\nor{ g_n}_{2,R_n}\leq \alpha_{t_n}^{-d/2q}N_{2,R_n\alpha_{t_n}}(h_n)+\alpha_{t_n}^{-d/2q}\nor{\sum_{i=1}^d f_{n,\sigma(\cdot),i}(\cdot)}_{2,R_n\alpha_{t_n}}.
\end{equation}

We bound from above now the norm of $ f_{n,\sigma(y),i}(y)$ for any $y\in T_\sigma(k)$: 
\begin{align*}
|\sum_{i=1}^d f_{n,\sigma(y),i}(y)|^2
 &\leq d \sum_{i=1}^d |h_n(\lfloor y\rfloor +e_{\sigma(1)}+\cdot\cdot\cdot +e_{\sigma(i)})-h_n(\lfloor y\rfloor +e_{\sigma(1)}+\cdot\cdot\cdot +e_{\sigma(i-1)})|^2.
\end{align*}
Then, 
\begin{align*}
\nor{\sum_{i=1}^d f_{n,\sigma,i}}_{2,R\alpha_{t_n}}^2
&\leq d\sum_{k\in B(R_n\alpha_{t_n})}\sum_{\sigma\in\mathfrak{S}(d)}\int\limits_{y\in T_{\sigma}(k)} \sum_{i=1}^d\vert \tilde\nabla_{\sigma(i)} h_n(k+e_{\sigma (1)}+\cdot\cdot\cdot+e_{\sigma(i-1)})\vert^2dy\\
&= \frac{d}{d!} \sum_{\sigma\in\mathfrak{S}(d)} \sum_{i=1}^d \sum_{k\in B(R_n\alpha_{t_n})}\vert\tilde\nabla_{\sigma(i)} h_n(k+e_{\sigma (1)}+\cdot\cdot\cdot+e_{\sigma(i-1)})\vert^2\\
&= \frac{d}{d!} \sum_{k\in B(R_n\alpha_{t_n})}\sum_{\sigma\in\mathfrak{S}(d)} \sum_{i=1}^d \vert\tilde\nabla_{i} h_n(k+e_1+\cdot\cdot\cdot+e_{i-1})\vert^2\\
&= d N_{2,R_n\alpha_{t_n}}^2(\tilde\nabla (h_n)).
\end{align*}

Using (\ref{nggn}):
\begin{equation}
\label{majnorfn}
\nor{\sum_{i=1}^d f_{n,\sigma(\cdot),i}(\cdot)}_{2,R_n\alpha_{t_n}}\leq \sqrt{d}N_{2,R_n\alpha_{t_n}}(\tilde\nabla h_n)=\frac{\sqrt{d}}{\alpha_{t_n}^{1-d/2q}}\nor{\nabla g_n}_{2,R_n} .
\end{equation}

Combining (\ref{majnorgn}) and (\ref{majnorfn}) we have: 
\begin{equation}
\label{mngn}\nor{g_n}_{2,R_n}\leq \alpha_{t_n}^{-d/2q} N_{2,R_n\alpha_{t_n}}(h_n)+ \sqrt{d}\alpha_{t_n}^{-1}\nor{\nabla g_n}_{2,R_n}.
\end{equation}

Putting together (\ref{nggn}),(\ref{mnggp}) and (\ref{mngn}), we have:
\begin{align*}
\nor{\nabla g_n}_{2,R_n}^2 &\geq \frac{R_n^\epsilon}{1+R_n^\epsilon}\nor{\nabla (g_n \Psi_{R_n})}_{2}^2
-\frac{2}{1+R_n^\epsilon}\pare{\frac{1}{\alpha_{t_n}^{d/2q}}N_{2,R_n\alpha_{t_n}}(h_n)+\frac{\sqrt{d}}{\alpha_{t_n}}\nor{\nabla g_n}_{2,R_n}}^2\\
&\geq \frac{R_n^\epsilon}{1+R_n^\epsilon}\nor{\nabla (g_n \Psi_{R_n})}_{2}^2
-\frac{2}{1+R_n^\epsilon}\left(\frac{1}{\alpha_{t_n}^{d/q}}N_{2,R_n\alpha_{t_n}}^2(h_n)+\frac{d}{\alpha_{t_n}^2}\nor{\nabla g_n}_{2,R_n}^2\right.\nonumber\\
&\left.  \hspace{7.3cm}+\frac{2\sqrt{d}}{\alpha_{t_n}^{1+d/2q}}N_{2,R_n\alpha_{t_n}}(h_n)\nor{\nabla g_n}_{2,R_n}\right)\\
&\geq \frac{R_n^\epsilon}{1+R_n^\epsilon}\nor{\nabla (g_n \Psi_{R_n})}_{2}^2
-\frac{2}{1+R_n^\epsilon}\left(\frac{1}{\alpha_{t_n}^{d/q}}N_{2,R_n\alpha_{t_n}}^2(h_n)+\frac{d}{\alpha_{t_n}^2}\nor{\nabla g_n}_{2}^2
\right.\\
&\left.\hspace{7.3cm}
+\frac{\sqrt{d}}{\alpha_{t_n}^{1+d/2q}}N_{2,R_n\alpha_{t_n}}^2  (h_n)+\frac{\sqrt{d}}{\alpha_{t_n}^{1+d/2q}}   \nor{\nabla g_n}_{2,R_n}^2\right).
\end{align*}

So, 
\begin{align*}
\nonumber&\nor{\nabla g_n}_{2,R_n}^2\pare{1+\frac{2}{1+R_n^\epsilon}\pare{\frac{d}{\alpha_{t_n}^2}+\frac{\sqrt{d}}{\alpha_{t_n}^{1+d/q}}}}\\
\geq &\frac{R_n^\epsilon}{1+R_n^\epsilon}\nor{\nabla (g_n \Psi_{R_n})}_2^2  - \frac{2}{1+R_n^\epsilon}\pare{\frac{1}{\alpha_{t_n}^{d/q}}+\frac{\sqrt{d}}{\alpha_{t_n}^{1+d/2q}}}N_{2,R_n\alpha_{t_n}}^2(h_n).
\end{align*}

Then, using (\ref{nggn}), the fact that $\alpha_{t_n}\rightarrow +\infty$ and that $1+d/2q> d/q$,  we obtain:
\begin{align}
\nonumber\alpha_{t_n}^{2-d/q} N_{2,R_n\alpha_{t_n}}^2(\tilde\nabla h_n)  \geq &\frac{R_n^\epsilon}{1+R_n^\epsilon+2\pare{\frac{d}{\alpha_{t_n}^2}+\frac{\sqrt{d}}{\alpha_{t_n}^{1+d/q}}}}
\nor{\nabla (g_n \Psi_{R_n})}_2^2   \\
\nonumber&-\frac{2}{1+R_n^\epsilon+2\pare{\frac{d^2}{\alpha_{t_n}^2}+\frac{\sqrt{d}}{\alpha_{t_n}^{1+d/q}}}}     \pare{\frac{1}{\alpha_{t_n}^{d/q}}+\frac{\sqrt{d}}{\alpha_{t_n}^{1+d/2q}}}N_{2,R_n\alpha_{t_n}}^2(h_n)\\
\label{mnghn}\geq &\pare{1- \frac{C}{R_n^\epsilon}}\nor{\nabla (g_n \Psi_{R_n})}_2^2 -\frac{C}{\alpha_{t_n}^{d/q}R_n^\epsilon}N_{2,R_n\alpha_{t_n}}^2(h_n).
\end{align}
Now we work on the $l_2$-norm of  $h_n$. Taking  the square in (\ref{mngn}), we have that $\forall \delta >0,$ 
$$\nor{g_n}_{2,R_n}^2 \leq (1+\frac{1}{\delta})\alpha_{t_n}^{-d/q} N_{2,R_n\alpha_{t_n}}^2(h_n)+ (1+\delta)d\alpha_{t_n}^{-2}\nor{\nabla g_n}_{2,R_n}^2.$$
So, 
\begin{align}
\nonumber\alpha_{t_n}^{-d/q}N_{2,R_n\alpha_{t_n}}^2(h_n)  &\geq \frac{\delta}{\delta +1}\nor{g_n}_{2,R_n}^2-\delta d\alpha_{t_n}^{-2}\nor{\nabla g_n}_{2,R_n}^2  \\
\label{mnhn}&\geq  \frac{\delta}{\delta +1}\nor{g_n\Psi_{R_n}}_{2}^2-\delta d\alpha_{t_n}^{-2}\nor{\nabla g_n}_{2,R_n}^2.
\end{align}

At this point of the proof, we have bounded from below $N_2^2(\tilde\nabla h_n)$ by $\nor{\nabla (g_n\Psi_{R_n})}_2^2$ (\ref{mnghn})
and $N_2^2(h_n)$ by $\nor{ (g_n\Psi_{R_n})}_2^2$ (\ref{mnhn}). Therefore, combining these two results:
\begin{align}
\nonumber& \frac{a}{\alpha_{t_n}^{d/q}}  N_{2,R_n\alpha_{t_n}}^2(h_n)+\frac{1}{2}\alpha_{t_n}^{2-d/q}N_{2,R_n\alpha_{t_n}}^2 (\tilde\nabla h_n)\\
 \nonumber\geq & a \pare{\frac{\delta}{\delta +1}\nor{g_n\Psi_{R_n}}_{2}^2-\delta d\alpha_{t_n}^{-2}\nor{\nabla g_n}_{2,R_n}^2} +\frac{1}{2}\pare{1- \frac{C}{R_n^\epsilon}}\nor{\nabla (g_n \Psi_{R_n})}_2^2 -\frac{C}{\alpha_{t_n}^{d/q}R_n^\epsilon}N_{2,R_n\alpha_{t_n}}^2(h_n)\\
 \nonumber\geq &\min\pare{\frac{\delta}{\delta +1},1- \frac{C}{R_n^\epsilon}} \pare{a\nor{g_n\Psi_{R_n}}_2^2+\frac{1}{2}\nor{\nabla (g_n \Psi_{R_n})}_{2}^2}
 -a\delta d\alpha_{t_n}^{-2}\nor{\nabla g_n}_{2,R_n}^2 -C\alpha_{t_n}^{-d/q}R_n^{-\epsilon}N_{2,R_n\alpha_{t_n}}^2(h_n)\\
\label{diff}\geq  &\min\pare{\frac{\delta}{\delta +1},1- \frac{C}{R_n^\epsilon}} \rho(a) \nor{g_n\Psi_{R_n}}_{2p}^2-a\delta d\alpha_{t_n}^{-2}\nor{\nabla g_n}_{2,R_n}^2 -C\alpha_{t_n}^{-d/q}R_n^{-\epsilon}N_{2,R_n\alpha_{t_n}}^2(h_n).
\end{align}

Now we show that we can assume that
 \begin{equation}
 \label{gna}
 \pare{1-CR_n^\frac{\epsilon-1}{2p}}\nor{g_n}_{2p,R_n}\leq \nor{g_n\Psi_{R_n}}_{2p}.
 \end{equation}
Indeed, for $a\in B_{R_n}$ let $g_{n,a}(x)=g_n(x-a)$. By periodicity of $g_n$, in one side we have 
\begin{align*}
\int\limits_{B_{R_n}}\int\limits_{B_{R_n}\backslash B_{R_n-R_n^\epsilon}}g_n^{2p}(x-a)dx\ da
&=\int\limits_{B_{R_n}\backslash B_{R_n-R_n^\epsilon}}\int\limits_{B_{R_n}}g_n^{2p}(x-a)dx\ da \\
&=\int\limits_{B_{R_n}\backslash B_{R_n-R_n^\epsilon}}\int\limits_{B_{R_n}}g_n^{2p}(x)dx\ da \\
 &\leq CR_n^{d-1+\epsilon }\nor{g_n}_{2p,R_n}^{2p},
\end{align*}
and on the opposite side we have
\begin{align*}
\int\limits_{B_{R_n}}\int\limits_{B_{R_n}\backslash B_{R_n-R_n^\epsilon}}g_n^{2p}(x-a)dx\ da
\geq Vol(B(1))R_n^d \inf\limits_{a\in B_{R_n}}\acc{\int\limits_{B_{R_n}\backslash B_{R_n-R_n^\epsilon}}g_n^{2p}(x-a)dx\ da}.
\end{align*}
Therefore
$$\inf\limits_{a\in B_{R_n}}\acc{\int\limits_{B_{R_n}\backslash B_{R_n^\epsilon}}g_n^{2p}(x-a)dx\ da}\leq C R_n^{\epsilon-1}\nor{g_n}_{2p,R_n}^{2p}.
$$
Remark that $g_n$ being periodic, for any $a\in B_{R_n}$, $\nor{g_n}_{2}=\nor{g_{n,a}}$ and $\nor{\nabla g_n}_2=\nor{\nabla g_{n,a}}_2$ . So we can assume (\ref{gna}). Hence, combining (\ref{diff}) and (\ref{gna}) we obtain:

\begin{align*}
&\frac{a}{\alpha_{t_n}^{d/q}}  N_{2,R_n\alpha_{t_n}}^2(h_n)+\frac{1}{2}\alpha_{t_n}^{2-d/q}N_{2,R_n\alpha_{t_n}}^2(\tilde\nabla h_n)\\
\geq &\min\pare{\frac{\delta}{\delta +1},1- \frac{C}{R_n^\epsilon}} \rho(a) \nor{g_n}_{2p,R_n}^2  \pare{1-CR_n^\frac{\epsilon-1}{2p}}^2-a\delta d\alpha_{t_n}^{-2}\nor{\nabla g_n}_{2,R_n}^2 -C\alpha_{t_n}^{-d/q}R_n^{-\epsilon}N_{2,R_n\alpha_{t_n}}^2(h_n).
\end{align*}
The problem is now to eliminate the norm of $g_n$. Using the definition (\ref{defgn}) of $g_n$ and the triangle inequality,
 we can prove that $$\nor{g_n}_{2p,R_n}\geq 1-C\alpha_{t_n}^{-1}\nor{\nabla g_n}_{2p,R_n}.$$
 Therefore, $\forall \gamma >0$,
$$ (1+\gamma)\nor{g_n}_{2p,R_n}^2+\frac{1+\gamma}{\gamma}C\alpha_{t_n}^{-2}\nor{\nabla g_n}_{2p,R_n}^2
\geq 1. $$
Finally,  
\begin{align*}
&\frac{a}{\alpha_{t_n}^{d/q}}  N_{2,R_n\alpha_{t_n}}^2(h_n)+\frac{1}{2}\alpha_{t_n}^{2-d/q}N_{2,R_n\alpha_{t_n}}^2(\tilde\nabla h_n)\\
\geq &\min\pare{\frac{\delta}{\delta+1},1- \frac{C}{R_n^\epsilon}}   \pare{1-CR_n^\frac{\epsilon-1}{2p}}^2\rho(a)\pare{\frac{1}{1+\gamma}  -
\frac{C}{\gamma\alpha_{t_n}^2}\nor{\nabla g_n}_{2p,R_n}^2}  \\
&-a\delta d\alpha_{t_n}^{-2}\nor{\nabla g_n}_{2,R_n}^2 -C\alpha_{t_n}^{-d/q}R_n^{-\epsilon}N_{2,R_n\alpha_{t_n}}^2(h_n).
\end{align*}
 We recall that $\nor{\nabla g_n}_{2,R_n}=\alpha_{t_n}^{1-d/2q}N_{2,R_n\alpha_{t_n}}(\tilde\nabla h_n)$. Moreover, as $\nor{h_n}_{\infty}\leq 1$, $\nor{\nabla g_n}_{2p,R_n}=\alpha_{t_n}N_{2p,R_n\alpha_{t_n}}(\tilde\nabla h_n)\leq \alpha_{t_n}N_{2,R_n\alpha_{t_n}}^{1/p}(\tilde\nabla h_n)$. We deduce:

 \begin{align*}
&\frac{a}{\alpha_{t_n}^{d/q}}  N_{2,R_n\alpha_{t_n}}^2(h_n)+\frac{1}{2}\alpha_{t_n}^{2-d/q}N_{2,R_n\alpha_{t_n}}^2(\tilde\nabla h_n)\\
\geq &\min\pare{\frac{\delta}{\delta+1},1- \frac{C}{R_n^\epsilon}}   \pare{1-CR_n^\frac{\epsilon-1}{2p}}^2\rho(a)\pare{\frac{1}{1+\gamma}  -
\frac{C}{\gamma}N_{2,R_n\alpha_{t_n}}^{2/p}(\tilde\nabla h_n)}  \\
&-a\delta d\alpha_{t_n}^{-d/2q}N_{2,R_n\alpha_{t_n}}^2(\tilde\nabla h_n) -C\alpha_{t_n}^{-d/q}R_n^{-\epsilon}N_{2,R_n\alpha_{t_n}}^2(h_n).
\end{align*}

 Let $n\rightarrow +\infty$, as $2>d/q$:
\begin{align*}
&\liminf_n \frac{a}{\alpha_{t_n}^{d/q}}  N_{2,R_n\alpha_{t_n}}^2(h_n)+\frac{1}{2}\alpha_{t_n}^{2-d/q}N_{2,R_n\alpha_{t_n}}^2(\tilde\nabla h_n)\\
=&\liminf_n\frac{a}{\alpha_{t_n}^{d/q}}  N_{2,R_n\alpha_{t_n}}^2(h_n)\pare{1+\frac{C}{aR_n^\epsilon}}   +\frac{1}{2}\alpha_{t_n}^{2-d/q}N_{2,R_n\alpha_{t_n}}^2(\tilde\nabla h_n)\pare{1+\frac{2ad\delta}{\alpha_{t_n}^{2-d/2q}}}\\
\geq & \liminf_n \min\pare{\frac{\delta}{\delta+1},1- \frac{C}{R_n^\epsilon}}   \pare{1-CR_n^\frac{\epsilon-1}{2p}}^2\rho(a)\pare{\frac{1}{1+\gamma}  -
\frac{C}{\gamma}N_{2,R_n\alpha_{t_n}}^{2/p}(\tilde\nabla h_n)}.
\end{align*}

In one hand, if $\alpha_{t_n}^{2-d/q}N_{2,R_n\alpha_{t_n}}(\tilde\nabla h_n)\rightarrow +\infty$, then the result is obvious. In the other hand, if  $\alpha_{t_n}^{2-d/q}N_{2,R_n\alpha_{t_n}}(\tilde\nabla h_n)$ is converging, then $N_{2,R_n\alpha_{t_n}}(\tilde\nabla h_n)\rightarrow 0$ because $2>d/q$, and we have:

 $$\liminf_n  \frac{a}{\alpha_{t_n}^{d/q}}  N_{2,R_n\alpha_{t_n}}^2(h_n)+\frac{1}{2}\alpha_{t_n}^{2-d/q}N_{2,R_n\alpha_{t_n}}^2(\tilde\nabla h_n) 
 \geq \min\pare{\frac{\delta}{\delta+1},1}\rho(a)\frac{1}{1+\gamma}.$$
 Then we let $\delta \rightarrow +\infty$ and $\gamma \rightarrow 0$ to have the result.\\
 $ $\\
 \textbf{Proof of proposition \ref{cste}:}\\
 We recall that $\rho(a)=\inf\acc{a\nor{h}_2^2+\frac{1}{2}\nor{\nabla h}_2^2,\nor{h}_{2p}=1}$. 
 Set $h_\beta (\cdot)=\beta^{d/2p}h(\beta \cdot)$. We remark that $\nor{h_\beta}_{2p}=1$, $\nor{h_\beta}_2=\beta^{-d/2q}\nor{h}_2$ and $\nor{\nabla h_\beta}_2=\beta^{1-d/2q}\nor{\nabla h}_2$.
 Then we minimize over $\beta$ the function: 
 $$\Phi_h(\beta)=a\beta^{-d/q}\nor{h}_2^2+\frac{1}{2}\beta^{2-d/q}\nor{\nabla h}_2^2 .$$
 Picking the optimal value $\beta^*=\sqrt\frac{2ad}{2q-d}\frac{\nor{h}_2}{\nor{\nabla h}_2}$ we have that 
 $$\rho(a)=a^{1-d/2q}\pare{\frac{2q}{2q-d}}\pare{\frac{2q-d}{2d}}^{d/2q}   \inf\acc{\nor{h}_2^{2-d/q}\nor{\nabla h}_2^{d/q},\nor{h}_{2p}=1}.$$
Then optimizing over $a>0$ the expression $\inf\acc{a-\rho(a),a>0}$ with the optimal value 
$$a^*= \frac{2q-d}{2d}\inf\acc{\nor{h}_2^{4q/d-2}\nor{\nabla h}_2^2,\nor{h}_{2p}=1}$$  we have that $$\inf\acc{a-\rho(a),a>0}   =   -\inf\acc{\frac{1}{2} \nor{h}_2^{4q/d-2}\nor{\nabla h}_2^2,\nor{h}_{2p}=1}.$$ 
 Note that the expression is invariant by the transformation $h_{\beta}(\cdot)=\beta^{d/2p}h(\beta\cdot)$, therefore we can freely add the condition $\nor{h}_{2}=1$. Then 
 $$\inf\acc{a-\rho(a),a>0}   =   -\inf\acc{\frac{1}{2}\nor{\nabla h}_2^2,\nor{h}_{2p}=\nor{h}_2=1}:=-\chi_{d,p}.$$

\textsc{Cl\'ement Laurent\\
LATP, UMR CNRS 6632\\
CMI, Universit\'e de Provence\\
39 rue Joliot-Curie, F-13453 Marseille cedex 13, France\\
laurent@cmi.univ-mrs.fr  }
 
\end{document}